\newlength{\defbaselineskip}
\newcommand{\setlinespacing}[1]%
           {\setlength{\baselineskip}{#1 \defbaselineskip}}
\theoremstyle{plain}
\newtheorem{thm}{Theorem}[section]
\newtheorem{cor}[thm]{Corollary}
\newtheorem{lem}[thm]{Lemma}
\newtheorem{prop}[thm]{Proposition}
\theoremstyle{definition}
\newtheorem{defn}{Definition}[section]
\newtheorem{rmk}{Remark}[section]
\newcommand{\eps}{\varepsilon}
\newcommand{\cU}{\mathcal{U}}
\newcommand{\bF}{\mathbb{F}}
\newcommand{\bR}{\mathbb{R}}
\newcommand{\bL}{\mathbb{L}}
\newcommand{\sF}{\mathscr{F}}
\newcommand{\sL}{\mathscr{L}}
\newcommand{\sS}{\mathscr{S}}
\newcommand{\sB}{\mathscr{B}}
\newcommand{\sM}{\mathscr{M}}
\newcommand{\sH}{\mathscr{H}}
\newcommand{\la}{\langle}
\newcommand{\ra}{\rangle}
\newcommand{\rrow}{\rightarrow}
\newcommand{\ves}{\varepsilon}
\makeatletter\@addtoreset{equation}{section} \makeatother
\begin{document}

\title{Maximum Principle for Optimal Control of Neutral Stochastic Functional Differential Systems}
\author{Wenning Wei\footnotemark[1]}
%\author{}
\footnotetext[1]{Department of Finance and Control Sciences, School
of Mathematical Sciences, and Laboratory of Mathematics for
Nonlinear Science, Fudan University, Shanghai 200433, China.
\textit{E-mail}: \texttt{wnwei@fudan.edu.cn}.}

\maketitle

%---------------
\begin{abstract}
In this paper, the optimal control problem of neutral stochastic functional differential equation (NSFDE) is discussed. A class of so-called neutral backward stochastic functional equations of Volterra type (VNBSFEs) are introduced as the adjoint equation. The existence and uniqueness of VNBSFE is established. The Pontryagin maximum principle is constructed for controlled NSFDE with Lagrange type cost functional.
\end{abstract}

\textbf{Keywords}: Neutral stochastic functional differential equations, stochastic optimal control, adapted $M\textmd{-}$solution, adjoint equation, Pontryagin maximum principle

\textbf{Mathematical Subject Classification (2010)}: 60H20, 93E20
%+++++++++++++++++++++++++++++++++++++

\section{Introduction}
In this paper, $\delta\geq0$ is a constant and $T>0$ is the terminal time. Let $(\Omega,\sF,\bF,P)$ be a complete filtered probability space on which a $d$-dimensional Brownian motion $W=\big\{W(t):t\in[0,T]\big\}$ is defined. $\big\{\sF_t:t\in[0,T]\big\}$ is the natural filtration of $W$ augmented by all $P$-null sets in $\sF$. Define
$\sF_t:=\sF_0$ for any $t\in[-\delta,0]$. Then $\bF:=\big\{\sF_t:t\in[-\delta,T]\big\}$ is a filtration satisfying the usual conditions.

Consider the following stochastic optimal control problem: minimize the Lagrange type cost functional
$$J(u(\cdot)):=E\Big[\int_0^Tl(t,X^t,u(t))dt\Big],$$
subject to
\begin{equation}\label{NSFDEs}
\left\{
\begin{split}
&d\,\big[X(t)-g(t,X^t,u(t))\big]=b(t,X^t,u(t))dt+\sigma(t,X^t,u(t))dW(t),\quad t\in[0,T],\\
&X(t)=\phi(t),~~t\in[-\delta,0],
\end{split}
\right.
\end{equation}
where $X^t$ denotes the restriction of the path of $X$ on $[t-\delta,t]$, and $u(\cdot)$ is the control variable. We will establish the maximum principle of this optimal control problem. As we know, it is the first time to consider this problem.

In many applications, people model systems via differential equations, and assume that the evolution rate of the state is
independent of the past state and determined solely at the present, such as the ordinary differential equations and partial differential equations. However, under closer scrutiny, a more realistic model would include some of the past state of the system. That is, the evolution rate of state should depend not only on the present state, but also some of the past state, or more generally, it should depend not only on the past and present state, but also on the evolution rate of the state in the past. In stochastic term, it can be expressed by the neutral stochastic functional differential equations (NSFDEs):
\begin{equation*}
\left\{
\begin{split}
&d\bigl[X(t)-g(t,X^t)\bigr]=b(t,X^t)\,dt+\sigma(t,X^t)\,dW(t),\quad t\in[0,T];\\
&X(t)=\phi(t),~~t\in[-\delta,0].
\end{split}
\right.
\end{equation*}
If $g\equiv0$, it is a stochastic functional differential equation (SFDE). When choosing $\delta\geq T$, and $g(t,\cdot), b(t,\cdot)$ and $\sigma(t,\cdot)$ suitably, It contains the interesting case that $g(t,\cdot), b(t,\cdot)$ and $\sigma(t,\cdot)$ depend on $X$ on $[0,t]$. Neutral functional differential equations model a large class of system with after-effect, which are widely used in biology, mechanics, physics, medicine and economics, such as population sizes, commodity supply fluctuations and so on. See \cite{Kolmanovskii99, Hale77, Hale_73, Kolmanovskii_Nosov81, Kolmanovskii1986, Cushing_77} and reference therein.

By now the research on NSFDEs mostly focuses on the well-posedness and stability of the solutions, see \cite{Shaikhe_10, Kolmanovskii_02, Shaikhet_97, Mao95, Mao_09, Ren_09} and reference therein. The optimal control problem of deterministic neutral functional differential equation was discussed by \cite{Banks72, Kent71, Kolmanovskii69}. The maximum principle of controlled SFDE was discussed by Hu and Peng \cite{Hu_Peng_96, Wu_09}. For the best knowledge of the author, the maximum principle of controlled NSFDE is still open. The difficulty of this problem mainly relies on the adjoint equation. As we know, adjoint equation is crucial for constructing maximum principle. The solution of NSFDE would not be a semi-martingale due to the part $g(t,X^t)$ in the left hand. Therefore, the traditional method dealing with the optimal control problem to SDEs introduced by Bismut \cite{Bismut73, Bismut76, Bismut78} will not apply. In this paper, under a technical condition $(A3)$, we introduce a linear neutral backward stochastic functional equation of Volterra type (VNBSFE) as the adjoint equation. The general form of VNBSFE goes as following:
\begin{equation}\label{VNBSFEs}
\left\{
\begin{split}
&Y(t)-G(t,Y_t)=\Psi(t)+\int_t^Tf(t,s,Y_s,Z(t,s),Z(s,t;\delta))\,ds-\int_t^TZ(t,s)\,dW(s),~t\in[0,T];\\
&Y(t)=\xi(t),~~t\in(T,T+\delta],
\end{split}
\right.
\end{equation}
where $Y_t$ denotes the restriction of the path of $Y$ on $[t,t+\delta]$, $Z(\cdot,\cdot)$ is defined on $[0,T+\delta]\times[0,T+\delta]$, and $Z(s,t;\delta)$ denotes the restriction of $Z$ on $[s,s+\delta]\times[t,t+\delta]$. When $G\equiv0$ and $\delta=0$, it reduces to
$$Y(t)=\Psi(t)+\int_t^Tf(t,s,Y(s),Z(t,s),Z(s,t))\,ds-\int_t^TZ(t,s)\,dW(s),$$
which is discussed by Yong \cite{Yong08} and called backward stochastic Volterra integral equation (BSVIE).

Similar as Yong \cite{Yong08}, the definition of $M$-solution of VNBSFE \eqref{VNBSFEs} is introduced. Then we prove the existence and uniqueness of VNBSFE \eqref{VNBSFEs} and give an estimate. Via the solution of a linear VNSFDE, we construct the maximum principle of the optimal control problem of NSFDE \eqref{NSFDEs}. When the state equation reduces to a stochastic differential equation (SDE), the maximum principle here will not degenerate to the traditional one in Bismut \cite{Bismut78}. We compare this two maximum principles and establish the explicit relation between them.

The rest of this paper is organized as follow: In section 2, we introduce some notations and the optimal control problem. Section 3 is devoted to the duality of linear NSFDEs and linear VNBSFEs. In section 4, the existence and uniqueness of VNBSFE \eqref{VNBSFEs} are proved. In section 5, we construct the maximum principle for controlled NSFDE \eqref{NSFDE} with Lagrange type cost functional. As an example, when the state equation reduces to a SDE, we compare the maximum principle here with the traditional one in \cite{Bismut78}, and establish the explicit relation between them in section 6.

\section{Preliminaries}
In this section, we will introduce some notations and the optimal control problem.

\subsection{Notations}
For any $A$ being a vector or matrix, denote $A'$ as the transformation of $A$. Denote $H$ as some Euclidean space, such as $\bR^n,\bR^{n\times d},~etc.$, and $|\cdot|$ as the norm and $\la\cdot,\cdot\ra$ as the inner product in $H$. Define
$$\bL^2(\Omega;H):=\left\{\eta:\Omega\rrow H, \hbox{ \rm $\sF_T$-measurable }|~E[|\eta|^2]<+\infty\right\},$$
For $r,s,\tau,\nu\in[-\delta,T+\delta]$, define
$$\sL^2_{\bF}(r,s;H):=\Big\{\theta:[r,s]\times\Omega\rrow H, \hbox{ \rm $\bF$-adapted} \bigm|
                 E\int_r^s|\theta(u)|^2\,du<+\infty \Big\},$$
$$\sS^2_{\bF}([r,s];H):=\Big\{\theta:[r,s]\times\Omega\rrow H, \hbox{ \rm $\bF$-adapted, path-continuous}
           \bigm| E\sup_{r\leq u\leq s}|\theta(u)|^2<+\infty\Big\},$$
$$\bL^2(r,s;\bL^2(\Omega;H)):=\Big\{\psi:[r,s]\times\Omega\rrow H, \hbox{ \rm $\sB([r,s])\times\sF_T$-measurable}
                  \bigm| E\!\int_r^s\!\!|\psi(u)|^2du<+\infty\Big\},$$
\begin{equation*}
\begin{split}
&\bL^2(r,s;\bL^2(\tau,\nu;H)):=\Big\{v:[r,s]\!\times\![\tau,\nu]\rrow H, \hbox{ \rm jointly-measurable} \bigm| \int_r^s\!\!\int_{\tau}^{\nu}\!\!|v(t,s)|^2ds\,dt<+\infty\Big\},\\
&\bL^2(r,s;\sL^2_{\bF}(\tau,\nu;H)):=\Big\{\vartheta:[r,s]\times[\tau,\nu]\times\Omega\rrow H
      \hbox{ \rm is $\sB([r,s]\times[\tau,\nu])\times\sF_T$-measurable},~~~~\\
&~~~~~~~~~~~~~~~~~~~~~~~~\hbox{ \rm $\vartheta(t,\cdot)$ is $\bF$-adapted for all $t\in[r,s]$, }
    \bigm| E\int_r^s\!\int_{\tau}^{\nu}\!|\vartheta(t,s)|^2dsdt<+\infty\Big\}.
\end{split}
\end{equation*}
For simplicity, denote
$$\sH^2(r,s):=\sL^2_{\bF}(r,s;\bR^n)\times\bL^2(r,s;\sL^2_{\bF}(r,s;\bR^{n\times d})),$$
equipped with norm
$$\|(\theta,\vartheta)\|^2_{\sH^2(r,s)}=E\left[\int_r^s|\theta(u)|^2du+\int_r^s\int_r^s|\vartheta(\nu,u)|^2\,du\,d\nu\right],$$
and
$$\sM^2(r,s):=\Big\{(\theta,\vartheta)\in\sH^2(r,s)~\Bigm|~\theta(t)=E[\theta(t)]+\int_r^s\vartheta(r,u)dW(u),~\forall
t\in[r,s]\Big\}.$$

\subsection{The Optimal Control Problem}
Consider a controlled NSFDE,
\begin{equation}\label{NSFDE}
\left\{
\begin{split}
&d\,\bigl[X(t)-g(t,X^t)\bigr]=b(t,X^t,u(t))\,dt+\sigma(t,X^t,u(t))\,dW(t), \quad t\in[0,T],\\
&X(t)=\phi(t),\quad t\in[-\delta,0],
\end{split}
  \right.
\end{equation}
where $X^t$ denote the restriction of $X$ on $[t-\delta,t]$, $\phi\in\sS^2_{\bF}([-\delta,0];\bR^n)$,
$$g:[0,T]\times\Omega\times C([0,\delta];\bR^n)\rrow\bR^n,$$
$$b:[0,T]\times\Omega\times C([0,\delta];\bR^n)\times\bR^m\rrow\bR^n,$$
$$\sigma:[0,T]\times\Omega\times C([0,\delta];\bR^n)\times\bR^m\rrow\bR^{n\times d},$$
are jointly measurable, and  $g(\cdot,\psi)$, $b(\cdot,\psi,u)$ and $\sigma(\cdot,\psi,u)$ are $\bF$-progressively
measurable for any $(\psi,u)\in C([0,\delta];\bR^n)\times\bR^m$. Here, for simplicity, we only discuss the case that $g$ does not depend $u$. For $g$ depends on $u$, see section 5.

Let $U\subseteq\bR^m$ be a nonempty convex set. Denote
$$\cU_{ad}:=\Big\{u(\cdot):[0,T]\times\Omega\rrow U,\hbox{ \rm $\bF$-progressively measurable }| ~E\int_0^T|u(t)|^2\,dt<+\infty\Big\}$$
as the admissible control set. For any $u(\cdot)\in\cU_{ad}$, denote the cost functional as
$$J(u(\cdot))=E\Big[\int_0^T l(t,X^t,u(t))\,dt\Big],$$
where
$$l:[0,T]\times\Omega\times C([0,\delta];\bR^n)\times\bR^m\rrow\bR$$
is jointly measurable, and $l(\cdot,\psi,u)$ is $\bF$-progressively measurable for any $(\psi,u)\in C([0,\delta];\bR^n)\times\bR^m$. Define $l(t,\cdot,\cdot)\equiv0$, for any $t\in(T,T+\delta]$.

Our optimal problem is to find a control $\bar{u}(\cdot)\in\cU_{ad}$, such that
$$J(\bar{u}(\cdot))=\inf_{u(\cdot)\in\cU_{ad}}\,J(u(\cdot)).$$
Denote $\bar{X}(\cdot)$ as the solution of NSFDE \eqref{NSFDE} corresponding to $\bar{u}(\cdot)$. Then $(\bar{X}(\cdot),\bar{u}(\cdot))$ is called the optimal pair.

Here are some assumptions on the coefficients.

(A1) $b,\sigma,l,g$ are continuously Fr\'{e}chet differentiable with respect to $x$, and $b,\sigma,l$ are continuously differentiable with respect to $u$. The derivatives $b_x,b_u\,\sigma_x,\sigma_u\,l_x,l_u,\,g_x$ are all bounded.

(A2) $b(\cdot,0,0),\sigma(\cdot,0,0),l(\cdot,0,0)\in\sL^2_{\sF}(0,T;H)$, $H=\bR^n,\bR^{n\times d},\bR$ respectively. $g,\,g_x$ are both continuous in $t$, and there is a constant $0<\kappa<1$, such that $\|g_x\|\leq\kappa$.

\medskip
Via the standard argument in \cite{Kolmanovskii_Nosov81, Mao95} among others, it is not too hard to show that under assumptions $(A1)$ and $(A2)$, for any $\phi(\cdot)\in\sS^2_{\bF}([-\delta,0];\bR^n)$ and $u(\cdot)\in\cU$, NSFDE \eqref{NSFDE} admits a unique solution $X(\cdot)\in\sS^2_{\bF}([-\delta,T];\bR^n)$. Thus the cost function $J(u(\cdot))$ is well-defined.
%´Ë´¦²Î¿¼Mao_97µÚ¶þ½Ú×îºóÒ»¶Î

\section{Adjoint Equation}
%In this section, we will construct the duality between linear NSFDEs and VNBSFEs which is crucial in the optimal control problems of NSFDEs.
Suppose that $(\bar{X}(\cdot),\bar{u}(\cdot))$ is the optimal pair. Let $\chi(\cdot)$ be the solution of the following Linear NSFDE,
\begin{equation}\label{1-order-derivative}
\left\{
  \begin{split}
    &d[\chi(t)-\bar{g}_x(t)\chi^t]=[\bar{b}_x(t)\chi^t+\bar{b}_u(t)\bar{v}(t)]\,dt
                        +[\bar{\sigma}_x(t)\chi^t+\bar{\sigma}_u(t)\bar{v}(t)]\,dW(t),\quad t\in[0,T],\\
    &\chi(t)=0,\quad t\in[-\delta,0].
  \end{split}
\right.
\end{equation}
Denote
$$I(\chi(\cdot))=E\int_0^T\bar{l}_x(t)\chi^t\,dt$$
as a linear functional.
Here $\bar{g}_x(t):=g_x(t,\bar{X}^t)$, $\bar{b}_x(t):=b_x(t,\bar{X}^t,\bar{u}(t))$, $\bar{b}_u(t):=b_u(t,\bar{X}^t,\bar{u}(t))$, $\bar{\sigma}_x(t):=\sigma_x(t,\bar{X}^t,\bar{u}(t))$, $\bar{\sigma}_u(t):=\sigma_u(t,\bar{X}^t,\bar{u}(t))$ and $\bar{l}_x(t):=l_x(t,\bar{X}^t,\bar{u}(t))$. By (A1) and (A2), $\bar{g}_x,\bar{b}_x,\bar{\sigma}_x,\bar{l}_x$ are the linear functionals of $C([0,\delta];\bR^n)$.

Denote
$$V_0([0,\delta];\bR^{k\times n}):=\{f:[0,\delta]\rrow\bR^{k\times n} \hbox{ \rm is bounded variational and left continuous on } [0,\delta)\}.$$
Via the Riesz Representation theorem, we have the following lemma.
\begin{lem}
There exist $G(t,\cdot),\,B(t,\cdot)\in V_0([0,\delta];\bR^{n\times n})$, $\Sigma_i(t,\cdot)\in V_0([0,\delta];\bR^{d\times n})(i=1,\cdot,d)$ and $L(t,\cdot)\in V_0([0,\delta];\bR^{1\times n})$, such that for all $\phi\in C([0,T];\bR^n)$,
$$\bar{g}_x(t)\phi=\int_0^{\delta}G(t,dr)\phi(r),\quad \bar{b}_x(t)\phi=\int_0^{\delta}B(t,dr)\phi(r),$$
$$\bar{\sigma}_x(t)\phi=\int_0^{\delta}\Sigma(t,dr)\phi(r),\quad \bar{l}_x(t)\phi=\int_0^{\delta}L(t,dr)\phi(r).$$
\end{lem}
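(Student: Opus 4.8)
The statement is a componentwise Riesz representation, so the plan is to reduce to the scalar case and invoke the classical Riesz theorem for $C([0,\delta];\bR)$. First I would fix $(t,\omega)\in[0,T]\times\Omega$ and observe that, by $(A1)$, $x\mapsto g(t,\omega,x)$ is continuously Fr\'echet differentiable with bounded derivative, so $\bar g_x(t)=g_x(t,\bar X^t)$ is a bounded linear operator $C([0,\delta];\bR^n)\to\bR^n$ of norm at most $\kappa$ by $(A2)$, uniformly in $(t,\omega)$; likewise $\bar b_x(t),\bar\sigma_x(t),\bar l_x(t)$ are bounded linear functionals by $(A1)$ and $(A2)$. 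Writing $\phi=\sum_{j=1}^n\phi_j e_j$ for $\phi\in C([0,\delta];\bR^n)$ in the standard basis $e_1,\dots,e_n$ of $\bR^n$, each map $\phi_j\mapsto\la e_i,\bar g_x(t)(\phi_j e_j)\ra$ is a bounded linear functional on $C([0,\delta];\bR)$.

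Next I would apply the classical Riesz representation theorem: every bounded linear functional on $C([0,\delta];\bR)$ is given by a Riemann--Stieltjes integral against a function of bounded variation, and this function is unique once normalized to vanish at $0$ and be left continuous on $[0,\delta)$, i.e.\ to lie in $V_0([0,\delta];\bR)$. Carrying this out for the $n^2$ scalar functionals attached to $\bar g_x(t)$ and collecting the results into an $n\times n$ matrix yields $G(t,\cdot)\in V_0([0,\delta];\bR^{n\times n})$ with $\bar g_x(t)\phi=\int_0^{\delta}G(t,dr)\phi(r)$ for all $\phi$; the identical procedure applied to $\bar b_x(t)$ gives $B(t,\cdot)$, applied entrywise to the matrix-valued $\bar\sigma_x(t)$ gives the functions $\Sigma_i(t,\cdot)$, and applied to the scalar-valued $\bar l_x(t)$ gives $L(t,\cdot)\in V_0([0,\delta];\bR^{1\times n})$.

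The part that needs care, and which I expect to be the main obstacle, is the joint measurability of these representing functions in $(t,\omega)$, which, although not displayed in the statement, is implicitly required when $G,B,\Sigma_i,L$ enter the adjoint VNBSFE later. I would handle it using uniqueness of the normalized representation: pick $\varrho^k\in C([0,\delta];\bR)$ with $0\le\varrho^k\le1$ and $\varrho^k\downarrow\mathbf 1_{[0,r)}$ pointwise; then dominated convergence for Stieltjes integrals gives $G_{ij}(t,r)=\lim_k\la e_i,\bar g_x(t)(\varrho^k e_j)\ra$ at every continuity point of $G_{ij}(t,\cdot)$, hence, using the left-continuous normalization and monotone limits from the left, at every $r\in[0,\delta)$. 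Since for each fixed $k$ the map $(t,\omega)\mapsto g_x(t,\bar X^t)(\varrho^k e_j)$ is $\bF$-progressively measurable by continuity of $g_x$ in $x$ and progressive measurability of $\bar X$, the pointwise limit $(t,\omega)\mapsto G_{ij}(t,r)$ is progressively measurable; the same reasoning covers $B,\Sigma_i,L$. The subtlety is precisely that weak-$*$ convergence of the representing measures pins down the BV functions only at their continuity points, so the monotone approximation of $\mathbf 1_{[0,r)}$ together with the chosen normalization, rather than an arbitrary approximation, is essential.
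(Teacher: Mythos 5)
Your proposal is correct and follows exactly the route the paper intends: the paper offers no written proof beyond the phrase ``Via the Riesz Representation theorem,'' and your componentwise reduction to scalar functionals on $C([0,\delta];\bR)$ plus the normalized (left-continuous, vanishing at $0$) Riesz representation is precisely that argument spelled out. Your additional care about joint measurability of the representing functions via monotone approximation of $\mathbf{1}_{[0,r)}$ addresses a point the paper leaves entirely implicit, and it is handled correctly.
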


We need the following technical assumption:

(A3) There exist probability measures $\lambda_i(i=0,1,2,3)$ on $[0,\delta]$, and $\bar{G}(t,r), \bar{B}(t,r), \bar{\Sigma}(t,r), \bar{L}(t,r)$, such that,
$$\int_0^{\delta}G(t,dr)\phi(r)=\int_0^{\delta}\bar{G}(t,r)\phi(r)\lambda_0(dr),
\quad \int_0^{\delta}B(t,dr)\phi(r)=\int_0^{\delta}\bar{B}(t,r)\phi(r)\lambda_1(dr),$$
$$\int_0^{\delta}\Sigma(t,dr)\phi(r)=\int_0^{\delta}\bar{\Sigma}(t,r)\phi(r)\lambda_2(dr),
\quad \int_0^{\delta}L(t,dr)\phi(r)=\int_0^{\delta}\bar{L}(t,r)\phi(r)\lambda_3(dr),$$

\begin{rmk}
Assumption (A3) holds in many cases, for example,

$\bullet$ $\bar{g}_x(t),\bar{b}_x(t),\bar{\sigma}_x(t),\bar{L}_x(t)$ are all deterministic and continuous in t, (the proof is similar to Lemma 4.1 in Hu and Peng \cite{Hu_Peng_96})

$\bullet$ In NSFDE \eqref{NSFDE}, $g(t,X^t)=\hat{g}(t,\int_0^{\delta}\alpha(t,r)X(t-r)\lambda_0(dr))$, and $b,\sigma,l$ possess similar form.
\end{rmk}

Under assumption (A3), equation \eqref{1-order-derivative} and the linear functional reduce to
\begin{equation}\label{linear_NSFDE}
  \begin{split}
\chi(t)-\int_0^{\delta}\bar{G}(t,r)\chi(t-r)\lambda_0(dr)
 &=\int_0^t\Big[\int_0^{\delta}\bar{B}(s,r)\chi(s-r)\lambda_1(dr)+\bar{b}_u(s)v(s)\Big]\,ds\\
    &+\int_0^t\Big[\int_0^{\delta}\bar{\Sigma}(s,r)\chi(s-r)\lambda_2(dr)+\bar{\sigma}_u(s)v(s)\Big]dW(s),
  \end{split}
\end{equation}
    and
$$I(\chi(\cdot))=E\int_0^T\int_0^{\delta}\bar{L}(t,r)\chi(t-r)\lambda_3(dr)\,dt.$$

\medskip
Denote $\rho(t):=\int_0^t\bar{b}_u(s)\,ds+\int_0^t\bar{\sigma}_u(s)\,dW(s)$. We have the following duality.
\begin{prop}\label{dual_represt}
Let $\chi\in\sS^2_{\bF}([-\delta,T];\bR^n)$ be the solution of NSFDE \eqref{linear_NSFDE}, and $(Y,Z)\in\sM^2(0,T+\delta)$ be the adapted M-solution of the following linear VNBSFE:
\begin{equation}\label{linear_VNBSFE}
\left\{
\begin{split}
&Y(t)-E_t\Big[\!\int_0^{\delta}\!\bar{G}'(t\!+\!r,r)Y(t\!+\!r)\lambda_0(dr)\!\Big]
 =\int_0^\delta\!\bar{L}(t+r,r)\lambda_3(dr)
 +\int_t^T\!\!E_s\Big[\int_0^{\delta}\!\!\!\bar{B}'(t\!+\!r,r)Y(s\!+\!r)\lambda_1(dr) \\
&~~~~~~~~~~~~~~~~~~~~~~
   +\int_0^{\delta}\!\!\!\bar{\Sigma}'(t\!+\!r,r)Z(s\!+\!r,t\!+\!r)\lambda_2(dr)\Big]\,ds-\int_t^TZ(t,s)\,dW(s),~~t\in[0,T],\\
&Y(t)=0,~~t\in(T,T+\delta].
\end{split}\right.
\end{equation}
Then the following relation holds:
$$I(\chi(\cdot))=E\Big[\int_0^T\!\la\rho(t),\,Y(t)\ra dt\Big].$$
\end{prop}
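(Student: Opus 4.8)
The plan is to prove the duality identity $I(\chi(\cdot))=E\big[\int_0^T\langle\rho(t),Y(t)\rangle\,dt\big]$ by computing $E\int_0^T\langle d\rho(t),\text{(something)}\rangle$ against the solutions of the two linear equations and matching terms. The natural starting point is the observation that $\rho$ is the "inhomogeneous part" driving the forward equation \eqref{linear_NSFDE}: writing $\Lambda^G\chi(t):=\int_0^\delta\bar G(t,r)\chi(t-r)\lambda_0(dr)$ and similarly $\Lambda^B,\Lambda^\Sigma,\Lambda^L$, equation \eqref{linear_NSFDE} reads $\chi(t)-\Lambda^G\chi(t)=\int_0^t\Lambda^B\chi(s)\,ds+\int_0^t\Lambda^\Sigma\chi(s)\,dW(s)+\rho(t)$, while the adjoint \eqref{linear_VNBSFE} reads $Y(t)-E_t[(\Lambda^G)^*Y(t)]=(\Lambda^L)^*\mathbf 1(t)+\int_t^T E_s[(\Lambda^B)^*Y(s)+(\Lambda^\Sigma)^*Z(s,\cdot)]\,ds-\int_t^T Z(t,s)\,dW(s)$, where the stars denote the adjoint (transpose-and-time-reversal) operators whose appearance we must justify by Fubini on $[0,T]\times[0,\delta]$ together with the shift $t\mapsto t+r$. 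The first step is therefore to make these adjoint relations precise: for any adapted processes $a,b$ one has $E\int_0^T\langle \Lambda^B a(t),b(t)\rangle\,dt=E\int_0^T\langle a(t),(\Lambda^B)^*b(t)\rangle\,dt$ with $(\Lambda^B)^*b(t)=\int_0^\delta \bar B'(t+r,r)b(t+r)\lambda_1(dr)$, valid because $\chi$ vanishes on $[-\delta,0]$ and $Y$ vanishes on $(T,T+\delta]$ so the boundary shifts cost nothing.

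Next I would test the VNBSFE against $d\rho$. Multiply the $t$-equation for $Y$ by suitable increments and integrate; more precisely, the cleanest route is to use the $M$-solution structure. Since $(Y,Z)\in\sM^2(0,T+\delta)$, for each $t$ we have $Y(t)=E[Y(t)]+\int_0^{t}Z(0,s)\,dW(s)$ — but the genuinely useful martingale representation is the one implicit in \eqref{linear_VNBSFE} itself: for fixed $t$, the process $s\mapsto E_s\big[\xi(t)+\int_t^T E_r[(\Lambda^B)^*Y(r)+(\Lambda^\Sigma)^*Z(r,\cdot)]\,dr\big]$ is a martingale whose representation gives $Z(t,\cdot)$. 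I would apply Itô's product rule to $\langle\rho(t),Y(t)\rangle$? — no: $Y(t)$ for fixed $t$ is $\sF_t$-measurable and $\rho(t)$ is a semimartingale in $t$, but $Y$ is not a semimartingale in $t$. Instead the correct manoeuvre, exactly as in Yong's BSVIE duality, is: substitute the explicit forward formula for $\chi$ into $I(\chi)=E\int_0^T\Lambda^L\chi(t)\,dt$, push $\Lambda^L$ through using its adjoint to land a factor $\int_0^\delta\bar L(t+r,r)\lambda_3(dr)$ against $\chi$, recognize this as $\xi(t)$-type term paired with $\chi(t)$, and then repeatedly use the forward dynamics $d[\chi-\Lambda^G\chi]=\Lambda^B\chi\,dt+\Lambda^\Sigma\chi\,dW+d\rho$ together with the backward dynamics to collapse everything; alternatively, dualize from the $Y$ side by computing $E\int_0^T\langle\rho(t),Y(t)\rangle\,dt$ directly.

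Concretely, on the $Y$-side: replace $Y(t)$ by the right-hand side of \eqref{linear_VNBSFE}, so that $E\int_0^T\langle\rho(t),Y(t)\rangle\,dt = E\int_0^T\big\langle\rho(t),E_t[(\Lambda^G)^*Y(t)] + \int_0^\delta\bar L(t+r,r)\lambda_3(dr) + \int_t^T E_s[(\Lambda^B)^*Y(s)+(\Lambda^\Sigma)^*Z(s,\cdot)]\,ds - \int_t^T Z(t,s)\,dW(s)\big\rangle\,dt$. In each of the four pieces I would move $\rho(t)=\int_0^t\bar b_u\,ds+\int_0^t\bar\sigma_u\,dW$ inside, using $E\langle\rho(t),E_t[\cdot]\rangle=E\langle\rho(t),\cdot\rangle$ on the $E_t$-term, using the Itô isometry $E\langle\int_0^t\bar\sigma_u\,dW,\int_t^T Z(t,s)\,dW\rangle$ — which vanishes because the integrands live on disjoint time intervals $[0,t]$ and $[t,T]$, killing the martingale term — and applying stochastic Fubini plus the product rule $d\langle\rho,\text{martingale}\rangle$ where the intervals overlap. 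Collecting the surviving terms and comparing with what one gets by plugging the forward representation of $\chi$ into $I(\chi)$ and using the adjoint identities from the first step yields the claimed equality; the bookkeeping is exactly the neutral-Volterra analogue of Yong's BSVIE duality lemma, with the extra $\Lambda^G$ and $(\Lambda^G)^*$ terms pairing up by the same Fubini-plus-shift argument.

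The main obstacle I expect is precisely the neutral terms: handling $\Lambda^G\chi$ on the forward side and $E_t[(\Lambda^G)^*Y]$ on the backward side simultaneously, since the forward equation is an SDE for $\chi-\Lambda^G\chi$ (not for $\chi$), so one cannot naively write $d\chi$. The resolution is to carry the combination $\chi-\Lambda^G\chi$ as the genuine semimartingale throughout, dualize against it, and only at the very end use Fubini and the time-shift $t\mapsto t+r$ (together with the vanishing boundary data $\chi|_{[-\delta,0]}=0$, $Y|_{(T,T+\delta]}=0$) to convert $E\int\langle\Lambda^G\chi(t),Y(t)\rangle$ into $E\int\langle\chi(t),(\Lambda^G)^*Y(t)\rangle=E\int\langle\chi(t),E_t[(\Lambda^G)^*Y(t)]\rangle$, the last step using that $\chi(t)$ is $\sF_t$-measurable. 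Keeping careful track of which stochastic integrals pair to zero by disjointness of time intervals, and which require the product rule, is the delicate part; everything else is routine once the adjoint operators $(\Lambda^B)^*,(\Lambda^\Sigma)^*,(\Lambda^G)^*,(\Lambda^L)^*$ are set up in the first step.
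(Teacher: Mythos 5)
Your proposal is correct and follows essentially the same route as the paper: the paper substitutes the forward equation to write $\rho(t)=\chi(t)-\Lambda^G\chi(t)-\int_0^t\Lambda^B\chi\,ds-\int_0^t\Lambda^\Sigma\chi\,dW$, pairs with $Y(t)$, and uses exactly your three ingredients --- Fubini plus the shift $t\mapsto t+r$ with the vanishing boundary data $\chi|_{[-\delta,0]}=0$, $Y|_{(T,T+\delta]}=0$ to form the adjoint operators; the $M$-solution representation $Y(t)=E[Y(t)]+\int_0^tZ(t,s)\,dW(s)$ (note your $Z(0,s)$ should read $Z(t,s)$, and this representation, not the one from the backward equation, is what produces the $Z(s+r,t+r)$ term) together with the It\^{o} isometry for the $\Lambda^\Sigma$ term; and the $\sF_t$-measurability of $\chi(t)$ to kill $\int_t^TZ(t,s)\,dW(s)$ --- before invoking the VNBSFE to leave only the $\bar L$ term, which is $I(\chi)$. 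Of the two directions you hedge between, this one-way substitution into $\rho$ is the cleaner; your alternative of expanding $Y$ via the backward equation leads to a meet-in-the-middle matching that still requires the forward expansion, but it closes the same way.
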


Note that the well-posedness of VNBSFE \eqref{linear_VNBSFE} will be discussed in the next section. Here we assume that \eqref{linear_VNBSFE} holds for $(Y,Z)\in\sM^2(0,T+\delta)$.

\begin{proof}
In view of \eqref{linear_NSFDE}, we have
\begin{equation*}
\begin{split}
\rho(t)=&\chi(t)-\int_0^{\delta}\!\!\bar{G}(t,r)\chi(t\!-\!r)\lambda_0(dr)
               -\int_0^t\!\!\int_0^{\delta}\!\!\bar{B}(s,r)\chi(s\!-\!r)\lambda_1(dr)\,ds\\
        &-\int_0^t\!\int_0^{\delta}\!\!\bar{\Sigma}(s,r)\chi(s\!-\!r)\lambda_2(dr)\,dW(s).
\end{split}
\end{equation*}
Since $Y\in\sL^2_{\bF}(0,T;\bR^n)$, we have
\begin{equation*}
\begin{split}
 &E\Big[\int_0^T\la Y(t),\rho(t)\ra\,dt\Big]\\
=&E\Big[\int_0^T\la Y(t),\chi(t)\ra\,dt\Big]
      -E\Big[\int_0^T\Big\la Y(t),\,\int_0^{\delta}\!\!\bar{G}(t,r)\chi(t\!-\!r)\lambda_0(dr)\Big\ra\,dt\Big]\\
&-E\Big[\int_0^T\Big\la Y(t),\,\int_0^t\!\!\int_0^{\delta}\!\!\bar{B}(s,r)\chi(s-r)\lambda_1(dr)ds\Big\ra\,dt\Big]\\
 &-E\Big[\int_0^T\!\!\Big\la\!Y(t),\,\int_0^t\!\!\int_0^{\delta}\!\!\bar{\Sigma}(s,r)\chi(s\!-\!r)\lambda_2(dr)dW(s)\!\Big\ra dt\Big]\\
=&E\Big[\int_0^T\la Y(t),\chi(t)\ra\,dt\Big]+I_1+I_2+I_3
\end{split}
\end{equation*}
By Fubini's theorem, we have
\begin{equation}\label{p1}
\begin{split}
  I_1=&\int_0^{\delta}\int_0^T\bigl\la Y(t),\,\bar{G}(t,r)\chi(t-r)\bigr\ra\,dt\,\lambda_0(dr)\\
     =&\int_0^{\delta}\int_{-r}^{T-r}\bigl\la \bar{G}'(t+r,r)Y(t+r),\,\chi(t)\bigr\ra\,dt\,\lambda_0(dr)\\
     =&\int_0^{\delta}\int_0^T\bigl\la\bar{G}'(t+r,r)Y(t+r),\chi(t)\bigr\ra\,dt\,\lambda_0(dr)\\
     =&\int_0^T\left\la\int_0^{\delta}\bar{G}'(t+r,r)Y(t+r)\lambda_0(dr),\chi(t)\right\ra dt,
\end{split}
\end{equation}
 and
\begin{equation}\label{p3}
\begin{split}
   I_2=&\int_0^{\delta}\int_0^T\left\la \bar{B}'(t,r)\!\!\int_t^T\!\!\!Y(s)ds,\,\chi(t-r)\right\ra\,dt\,\lambda_1(dr)\\
      =&\int_0^{\delta}\int_{-r}^{T-r}\left\la \bar{B}'(t+r,r)\!\!\int_t^T\!\!Y(s+r)ds,\,\chi(t)\right\ra\,dt\,\lambda_1(dr)\\
      =&\int_0^T\left\la\int_t^T\!\!\int_0^{\delta}\!\bar{B}'(t+r,r)Y(s+r)\lambda_1(dr)ds,\,\chi(t)\right\ra\,dt.\\
\end{split}
\end{equation}
Since $(Y,Z)\in\sM^2(0,T+\delta)$, $Y(t)=E[Y(t)]+\int_0^tZ(t,s)\,dW(s)$. Then
\begin{equation}\label{p5}
\begin{split}
  I_3=&\int_0^T\left\la\int_0^tZ(t,s)dW(s),\,\int_0^t\!\!\int_0^{\delta}\!\!\bar{\Sigma}(s,r)\chi(s-r)\lambda_2(dr)dW(s)\right\ra\,dt\\
     =&\int_0^T\int_0^t\left\la Z(t,s),\,\int_0^{\delta}\!\!\bar{\Sigma}(s,r)\chi(s-r)\lambda_2(dr)\right\ra\,ds\,dt\\
     =&\int_0^{\delta}\int_0^T\left\la\int_t^T \!\!\bar{\Sigma}'(t,r)Z(s,t)ds,\,\chi(t-r)\right\ra\,dt\,\lambda_2(dr)\\
     =&\int_0^{\delta}\int_{-r}^{T-r}\left\la \bar{\Sigma}'(t+r,r)\!\int_t^T\!\! Z(s+r,t+r)ds,\,\chi(t)\right\ra\,dt\,\lambda_2(dr)\\
     =&\int_0^T \left\la\int_t^T\!\!\int_0^{\delta}\!\!\bar{\Sigma}'(t+r,r)Z(s+r,t+r)\lambda_2(dr)ds,\,\chi(t)\right\ra\,dt.
\end{split}
\end{equation}
Since for any $t\in(T,T+\delta]$, $l(t,\cdot,\cdot)\equiv0$, then $\bar{L}(t,r)\equiv0$, for any $t\in(T,T+\delta]$. Deduce from \eqref{p1},\eqref{p3} and \eqref{p5}, we have
\begin{equation*}
\begin{split}
     E\Big[\int_0^T\bigl\la Y(t),\rho(t)\bigr\ra\,dt\Big]
 =\,&E\Big[\int_0^T\Big\la \int_0^\delta\!\!\bar{L}(t\!+\!r,r)\lambda_3(dr)
              -\int_t^T\!\!Z(t,s)dW(s),\,\chi(t)\Big\ra\,dt\Big]\\
 =\,&E\Big[\int_0^T\Big\la \int_0^\delta\!\!\bar{L}(t\!+\!r,r)\lambda_3(dr),\,\chi(t)\Big\ra\,dt\Big]\\
 =\,&E\Big[\int_0^T\int_0^\delta\bar{L}(t,r)\chi(t-r)\,\lambda_3(dr)\,dt\Big]=I(\chi(\cdot)).
\end{split}
\end{equation*}
\end{proof}

\section{Well-posedness of VNBSFEs}
In this section, we are concerned with the well-posedness of general VNBSFEs. We introduce the definition of adapted M-solution, which was first introduced in Yong \cite{Yong08} for backward stochastic Volterra integral equations (BSVIEs). The existence, uniqueness and an estimate of the adapted $M$-solution of VNBSFE are proved.

Consider a general VNBSFE,
\begin{equation}\label{VNBSFE}
\left\{\!
\begin{split}
Y(t)-G(t,Y_t)&=\Psi(t)+\!\int_t^T\!\!\!f(t,s,Y_s,Z(t,s),Z(s,t;\delta))ds+\!\int_t^T\!\!\!Z(t,s)dW(s),~t\in[0,T];\\
Y(t)=\xi(t),~~&~~t\in(T,T+\delta],
\end{split}
\right.
\end{equation}
where $Y_t$ denotes the restriction of $Y$ on $[t,t+\delta]$, $Z(\cdot,\cdot)$ is an unknown mapping defined on $[0,T+\delta]\times[0,T+\delta]$, $Z(t,s)$ denotes the value of $Z$ at $(t,s)$, and $Z(s,t;\delta)$ denotes the restriction of $Z(\cdot,\cdot)$ on $[s,s+\delta]\times[t,t+\delta]$.

For any $0\leq R\leq S\leq T+\delta$, define
$$\triangle[R,S]:=\{(t,s)\in[R,S]\times[R,S]~|~R\leq t\leq s\leq S\},$$
$$\triangle^c[R,S]:=[R,S]\times[R,S]\setminus\triangle[R,S].$$
For simplicity, denote $\triangle:=\triangle[0,T]$, $\triangle^c:=\triangle^c[0,T]$,
$\triangle_{\delta}:=\triangle[0,T+\delta]$ and $\triangle^c_{\delta}:=\triangle^c[0,T+\delta]$.

\bigskip
$(G,f)$ in \eqref{VNBSFE} is called the generator of VNBSFEs. For $(G,f)$, there exist two functional $J$ and $F$,

$\bullet$ $J:[0,T]\times\Omega\times\bL^2(0,\delta;\bR^n)\rrow\bR^n$ is jointly measurable, and $J(\cdot,\phi)$ is $\bF$-progressively measurable for any $\phi\in\bL^2(0,\delta;\bR^n)$,

$\bullet$ $F:\triangle\times\Omega\times\bL^2(0,\delta;\bR^n)\times\bR^{n\times d}\times\bL^2(0,\delta;\bL^2(0,\delta;\bR^{n\times d}))\rrow\bR^n$ is jointly measurable, and $F(t,\cdot,\phi,w,\varphi)$ is $\bF$-progressively measurable for all $(t,\phi,w,\varphi)$ fixed in corresponding space, and $(J,F)$ satisfies,

(H1) There are $\kappa\in[0,1)$ and $\varrho_0$ being a probability measure on $[0,\delta]$, such that for any
$\phi,\bar{\phi}\in\bL^2(0,\delta;\bR^n)$,
\begin{equation}\label{G}
|J(t,\phi_t)-J(t,\bar{\phi}_t)|^2\leq \kappa\int_0^{\delta}|\phi(u)-\bar{\phi}(u)|^2\,\varrho_0(du),
\end{equation}

(H2) There are $L>0$, $\varrho_1$ and $\varrho_2$ being probability measures on $[0,\delta]$, such that for all
$(\phi,w,\varphi),(\bar{\phi},\bar{w},\bar{\varphi})$ in corresponding space and $(t,s)\in\triangle$
\begin{equation}\label{f}
\begin{split}
        &~~|F(t,s,\phi,w,\varphi))-f(t,s,\bar{\phi},\bar{w},\bar{\varphi})|\\
\leq\,\,&L\,\Big[\int_0^{\delta}|\phi(u)-\bar{\phi}(u)|\,\varrho_1(du)+|w-\bar{w}|
    +\int_0^{\delta}|\varphi(u,u)-\bar{\varphi}(u,u)|\,\varrho_2(du)\Big].
\end{split}
\end{equation}

$(G,f)$ are the functionals defined by
$$G(t,y_t)=E_t[J(t,y_t)],\quad f(t,s,y_s,z(t,s),z(s,t;\delta))=E_s[F(t,s,y_s,z(t,s),z(s,t;\delta))],$$
for all $(y(\cdot),z(\cdot))\in\sH^2(0,T+\delta)$, $(t,s)\in\triangle$.

(H3) $|G(t,0)|\in\sL^2_{\bF}(0,T;\bR^n)$,  $f_0(t,s):=f(t,s,0,0,0,0)\in\bL^2(0,T;\sL^2_{\bF}(0,T;\bR^n))$.

\medskip
Here is the definition of adapted $M\textmd{-}$solution of VNBSFE \eqref{VNBSFE}.

\begin{defn}\label{M-solution}
A pair of process $(Y,Z)\in\sH^2(0,T+\delta)$ is called the adapted $M\textmd{-}$solution of VNBSFE \eqref{VNBSFE}, if \eqref{VNBSFE} holds in It\^{o}'s sense for almost all $t\in[0,T+\delta]$,
$$Y(t)=E[Y(t)]+\int_0^tZ(t,s)\,dW(s), ~~~a.e.~t\in[0,T+\delta],$$
and $Z(t,s)=0$ on $(t,s)\in\triangle_{\delta}\setminus\triangle$.
\end{defn}

\begin{rmk}
In VNBSFE \eqref{VNBSFE}, we only set the terminal condition $Y(t)=\xi(t)$ on $(T,T+\delta]$. The value of $Y$ at $T$ is
determined via
$$Y(T)+G(T,Y_T)=\Psi(T),$$
and the value of $Z$ on $\triangle^c_{\delta}\setminus\triangle^c$ is endogenously determined by
\begin{equation}\label{3}
\xi(t)=E[\xi(t)]+\int_0^tZ(t,s)\,dW(s),~~t\in[T,T+\delta].
\end{equation}
Since $f$ depends on $Z(t,s)$ on $\triangle$ without anticipation, the equality of \eqref{VNBSFE} is independent of the value of $Z$ on $\triangle_{\delta}\setminus\triangle$. That is, any value of $Z$ on $\triangle_{\delta}\setminus\triangle$ equalizes VNBSFE \eqref{VNBSFE}. For the uniqueness of solution, we define
$Z(t,s)=0$ in Definition \ref{M-solution} on $\triangle_{\delta}\setminus\triangle$.
\end{rmk}

For all $\tau\in[0,T+\delta]$, define a subspace of $\sH^2(0,\tau)$,
$$\sM^2(0,\tau):=\left\{(\theta,\vartheta)\in\sH^2(0,\tau)~\Bigm|
~\theta(t)=E[\theta(t)]+\int_0^t\vartheta(t,s)dW(s),~\forall t\in[0,\tau]\right\}$$
equipped with norm
$$\|(\theta,\vartheta)\|^2_{\sM^2(0,\tau)}=E\left[\int_0^\tau|\theta(u)|^2du
+\int_0^\tau\int_s^\tau|\vartheta(s,u)|^2\,du\,ds\right].$$
Then $\sM^2(0,\tau)$ is a closed subspace of $\sH^2(0,\tau)$ under the norm $\|\cdot\|_{\sH^2(0,\tau)}$. In fact, It is also a complete space under $\|\cdot\|_{\sM^2(0,\tau)}$, because $\|\cdot\|_{\sH^2(0,\tau)}$ is equivalent to $\|\cdot\|_{\sM^2(0,\tau)}$ in $\sM^2(0,\tau)$. For all
$(\theta,\vartheta)\in\sM^2(0,\tau)$, viewing
$$\theta(t)=E[\theta(t)]+\int_0^t\vartheta(t,s)\,dW(s),~~t\leq\tau,$$
we have
$$E\left[\int_0^t|\vartheta(t,s)|^2\,ds\right]=E\bigl[|\theta(t)-E[\theta(t)]|^2\bigr]\leq 2E[|\theta(t)|^2].$$
Then
\begin{equation*}
\begin{split}
&\|(\theta,\vartheta)\|^2_{\sH^2(0,\tau)}=E\left[\int_0^{\tau}|\theta(t)|^2\,dt
            +\int_0^{\tau}\int_0^{\tau}|\vartheta(t,s)|^2\,ds\,dt\right]\\
=&E\left[\int_0^{\tau}|\theta(t)|^2\,dt+\int_0^{\tau}\int_t^{\tau}|\vartheta(t,s)|^2\,ds\,dt
        +\int_0^{\tau}\int_0^t|\vartheta(t,s)|^2\,ds\,dt\right]\\
\leq&2E\left[\int_0^{\tau}|\theta(t)|^2\,dt+\int_0^{\tau}\int_t^{\tau}|\vartheta(t,s)|^2\,ds\,dt\right]
        =2\|(\theta,\vartheta)\|^2_{\sM^2(0,\tau)}\\
\leq&2E\left[\int_0^{\tau}|\theta(t)|^2\,dt+\int_0^{\tau}\int_0^{\tau}|\vartheta(t,s)|^2\,ds\,dt\right]
           =2\|(\theta,\vartheta)\|^2_{\sH^2(0,\tau)}.
\end{split}
\end{equation*}

Therefore, if $(Y,Z)\in\sH^2(0,T+\delta)$ is the adapted $M$-solution of VNBSFE \eqref{VNBSFE}, it means $(Y,Z)\in\sM^2(0,T+\delta)$ and $Z(t,s)=0$ on $\triangle_{\delta}\setminus\triangle$.

Before showing the existence and uniqueness of adapted M-solution of VNBSFE \eqref{VNBSFE}, we discuss
some backward equations. First, consider the following backward stochastic differential equation (BSDE),
\begin{equation}\label{BSDE}
y(t)=\zeta+\int_t^Tf(s)\,ds-\int_t^Tz(s)\,dW(s),~~~t\in[0,T],
\end{equation}
where $f:[0,T]\times\Omega\rrow \bR^n$ is $\bF$-progressively measurable, and $f(t)\in\sL^2_{\bF}(0,T;\bR^n)$ and $\zeta\in\bL^2(\Omega;\bR^n)$. Then we have
\begin{lem}\label{BSDE_conclusion}
BSDE \eqref{BSDE} admits a unique pair of solution
$(y,z)\in\sS^2_{\bF}([0,T];\bR^n)\times\sL^2_{\bF}(0,T;\bR^{n\times d})$, and for any $t\in[0,T]$,
\begin{equation}\label{BSDE_est}
e^{\beta t}|Y(t)|^2+E\Big[\int_t^Te^{\beta s}|Z(s)|^2\,ds\Bigm|\sF_t\Big]
\leq E\Big[2e^{\beta T}|\zeta|^2+\alpha\int_t^Te^{\beta s}|f(s)|^2\,ds\Bigm|\sF_t\Big],
\end{equation}
where $\alpha$ and $\beta$ are any two positive constants satisfying $\beta>\frac{2}{\alpha}$.
\end{lem}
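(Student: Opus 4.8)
The plan is to dispatch existence and uniqueness first and then read off the estimate from an It\^{o} expansion. \emph{Existence and uniqueness.} Since the driver $f(s)$ in \eqref{BSDE} does not depend on the unknowns, the equation is in fact explicitly solvable. I would set $M(t):=E\big[\zeta+\int_0^Tf(s)\,ds\,\big|\,\sF_t\big]$; because $\zeta\in\bL^2(\Omega;\bR^n)$ and $f\in\sL^2_{\bF}(0,T;\bR^n)$, this is a square-integrable $\bF$-martingale, so the martingale representation theorem produces a unique $z\in\sL^2_{\bF}(0,T;\bR^{n\times d})$ with $M(t)=M(0)+\int_0^tz(s)\,dW(s)$. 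Defining $y(t):=M(t)-\int_0^tf(s)\,ds=E\big[\zeta+\int_t^Tf(s)\,ds\,\big|\,\sF_t\big]$, one checks directly that $(y,z)$ solves \eqref{BSDE}, that $y$ admits a path-continuous modification, and --- via Doob's maximal inequality applied to $M$ --- that $E\sup_{0\le t\le T}|y(t)|^2<\infty$, so $(y,z)\in\sS^2_{\bF}([0,T];\bR^n)\times\sL^2_{\bF}(0,T;\bR^{n\times d})$. For uniqueness, the difference of two solutions solves \eqref{BSDE} with $\zeta=0$ and $f\equiv0$; applying It\^{o}'s formula to $|\cdot|^2$ and taking expectations forces the $z$-parts to coincide a.e.\ and hence the $y$-parts to coincide.

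\emph{The estimate.} I would apply It\^{o}'s formula to $s\mapsto e^{\beta s}|y(s)|^2$ on $[t,T]$. Using $dy(s)=-f(s)\,ds+z(s)\,dW(s)$ one gets
$$e^{\beta t}|y(t)|^2+\int_t^Te^{\beta s}|z(s)|^2\,ds=e^{\beta T}|\zeta|^2+\int_t^Te^{\beta s}\big(-\beta|y(s)|^2+2\langle y(s),f(s)\rangle\big)\,ds-2\int_t^Te^{\beta s}\langle y(s),z(s)\,dW(s)\rangle.$$
By Young's inequality $2\langle y(s),f(s)\rangle\le\tfrac{2}{\alpha}|y(s)|^2+\tfrac{\alpha}{2}|f(s)|^2$, so because $\beta>\tfrac{2}{\alpha}$ the resulting term $\int_t^Te^{\beta s}\big(\tfrac{2}{\alpha}-\beta\big)|y(s)|^2\,ds$ is $\le 0$ and can be discarded. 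Since $y\in\sS^2_{\bF}$ and $z\in\sL^2_{\bF}$, Cauchy--Schwarz gives $E\int_0^Te^{2\beta s}|y(s)|^2|z(s)|^2\,ds<\infty$, so $\int_0^{\cdot}e^{\beta s}\langle y(s),z(s)\,dW(s)\rangle$ is a true martingale; taking $E[\,\cdot\mid\sF_t]$ annihilates it and leaves
$$e^{\beta t}|y(t)|^2+E\Big[\int_t^Te^{\beta s}|z(s)|^2\,ds\,\Big|\,\sF_t\Big]\le E\Big[e^{\beta T}|\zeta|^2+\tfrac{\alpha}{2}\int_t^Te^{\beta s}|f(s)|^2\,ds\,\Big|\,\sF_t\Big],$$
which is even sharper than \eqref{BSDE_est}.

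The whole argument is classical, in the spirit of Pardoux--Peng; I do not anticipate a genuine obstacle. The only steps that need a word of care are verifying that $y$ lies in $\sS^2_{\bF}$ (so that the solution has the claimed regularity and, in particular, $\sup_t|y(t)|\in\bL^2$) and confirming that the stochastic integral in the It\^{o} expansion is a true, not merely local, martingale before conditioning on $\sF_t$.
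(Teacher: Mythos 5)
The paper offers no proof of this lemma at all --- it is stated as a classical fact (in the spirit of Pardoux--Peng / Yong), so there is nothing to compare against line by line; your job here was simply to supply the standard argument, and you essentially have. The explicit construction via the martingale $M(t)=E[\zeta+\int_0^Tf(s)\,ds\mid\sF_t]$ and martingale representation is correct, the uniqueness argument is correct, and the It\^{o} computation with Young's inequality and $\beta>\tfrac{2}{\alpha}$ delivers the estimate (indeed with the sharper constants $e^{\beta T}|\zeta|^2+\tfrac{\alpha}{2}\int$, which implies the stated \eqref{BSDE_est}).

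One justification is wrong as written, though the conclusion it supports is true. You claim that ``since $y\in\sS^2_{\bF}$ and $z\in\sL^2_{\bF}$, Cauchy--Schwarz gives $E\int_0^Te^{2\beta s}|y(s)|^2|z(s)|^2\,ds<\infty$.'' That does not follow: bounding $E\big[\sup_s|y(s)|^2\int_0^T|z(s)|^2ds\big]$ by Cauchy--Schwarz on $\Omega$ requires fourth moments of $y$ and of $\int|z|^2$, which you have not established. The standard repair is to invoke the Burkholder--Davis--Gundy inequality on the local martingale $N_t=\int_0^te^{\beta s}\la y(s),z(s)\,dW(s)\ra$: its quadratic variation satisfies $\la N\ra_T\le e^{2\beta T}\sup_s|y(s)|^2\int_0^T|z(s)|^2ds$, whence
\begin{equation*}
E\Big[\sup_{t\le T}|N_t|\Big]\le C\,E\big[\la N\ra_T^{1/2}\big]\le C e^{\beta T}\Big(E\sup_s|y(s)|^2\Big)^{1/2}\Big(E\int_0^T|z(s)|^2ds\Big)^{1/2}<\infty,
\end{equation*}
so $N$ is a uniformly integrable true martingale and conditioning on $\sF_t$ annihilates it as you want. (Alternatively, localize, pass to the limit by monotone/dominated convergence, and avoid the martingale claim altogether.) With that one step repaired, the proof is complete and is the argument the paper implicitly relies on.
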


Consider the following backward integral equation,
\begin{equation}\label{BSDE_sequence}
\rho(t,s)=\Phi(t)+\int_s^Th(t,u)\,du-\int_s^T\nu(t,u)\,dW_u,~~~~~~t\in[0,T],
\end{equation}
where $\Phi\in\bL^2(0,T;\bL^2(\Omega;\bR^n))$, $h:[0,T]^2\times\Omega\rrow\bR^n$ are given, and
$h\in\bL^2(0,T;\sL^2_{\bF}(0,T;\bR^n))$. Fixed $t\in[0,T]$, equation \eqref{BSDE_sequence} is a BSDE with generator $h(t,\cdot)\in\sL^2_{\bF}(0,T;\bR^n)$ and terminal condition $\Phi(t)\in\bL^2(\Omega;\bR^n)$. So \eqref{BSDE_sequence} is a family of BSDEs parameterized by $t\in[0,T]$. Let $s=t$, $y(t)=\rho(t,t)$ and $z(t,u)=\nu(t,u)$ when $u\geq t$. Then
\begin{equation}\label{volterraBSIE}
y(t)=\Phi(t)+\int_t^Th(t,u)\,du-\int_t^Tz(t,u)\,dW_u,~~t\in[0,T].
\end{equation}
It is not a BSDE, but a backward stochastic Volterra integral equation (BSVIE), which was first discussed in Lin
\cite{Lin_jianzhong02}.

\begin{rmk}
In equation \eqref{volterraBSIE}, the equality is independent of $z$ on $\triangle^c$. Therefore any value of $z$ on
$\triangle^c$ equalizes \eqref{volterraBSIE}, such as $z(t,u)=\nu(t,u)$ or $z(t,u)=0$, $(t,u)\in\triangle^c$. Therefore, the uniqueness of equation \eqref{volterraBSIE} does not hold. However, in the definition of adapted $M$-solution, the value of $z$ on $\triangle$ is settled by $y(t)=E[y(t)]+\int_0^tz(t,s)\,dW(s)$. This determines the uniqueness.
\end{rmk}

The following lemma can be found in Yong \cite{Yong08}.
\begin{lem}\label{conclusionVBSIE}
BSVIE \eqref{volterraBSIE} admits a unique pair of solution $(y(\cdot),z(\cdot,\cdot))\in\sM^2(0,T)$. In addition,
\begin{equation}\label{11}
E\Big[e^{\beta t}|y(t)|^2+\int_t^Te^{\beta s}|z(t,s)|^2\,ds\Big]
\leq E\Big[|\Phi(t)|^2+\alpha\int_t^Te^{\beta s}|h(t,s)|^2\,ds\Big].
\end{equation}
where $\alpha>0$ and $\beta>\frac{2}{\alpha}$ are any two positive constants.
\end{lem}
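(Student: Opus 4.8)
The plan is to build the solution diagonally out of a family of ordinary BSDEs and then to pin down the values of $z$ off $\triangle$ by martingale representation; no fixed-point argument is needed because the free term $h$ carries no dependence on the unknowns. For each fixed $t\in[0,T]$, regard \eqref{BSDE_sequence} as a BSDE in the running variable $s\in[0,T]$, with terminal datum $\zeta=\Phi(t)\in\bL^2(\Omega;\bR^n)$ and free term $h(t,\cdot)\in\sL^2_{\bF}(0,T;\bR^n)$, and apply Lemma \ref{BSDE_conclusion} to get a unique pair $(\rho(t,\cdot),\nu(t,\cdot))\in\sS^2_{\bF}([0,T];\bR^n)\times\sL^2_{\bF}(0,T;\bR^{n\times d})$. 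Set $y(t):=\rho(t,t)$ and $z(t,s):=\nu(t,s)$ for $(t,s)\in\triangle$. Evaluating \eqref{BSDE_sequence} at $s=t$ gives precisely \eqref{volterraBSIE}, and $y(t)=\rho(t,t)$ is $\sF_t$-measurable.

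Equation \eqref{volterraBSIE} says nothing about $z$ on $\triangle^c$, so next I would invoke the martingale representation theorem: as $y(t)\in\bL^2(\Omega;\bR^n)$ is $\sF_t$-measurable there is a unique $z(t,\cdot)\in\sL^2_{\bF}(0,t;\bR^{n\times d})$ with $y(t)=E[y(t)]+\int_0^tz(t,s)\,dW(s)$, which defines $z$ on $\triangle^c$ and gives the $M$-solution property. For the integrability $(y,z)\in\sM^2(0,T)$ and the estimate \eqref{11}, note that $E\int_0^t|z(t,s)|^2ds=E|y(t)-E[y(t)]|^2\le 2E|y(t)|^2$, and apply the energy estimate of Lemma \ref{BSDE_conclusion} at $s=t$,
$$e^{\beta t}|y(t)|^2+E\Big[\int_t^Te^{\beta s}|z(t,s)|^2\,ds\,\Bigm|\,\sF_t\Big]\le E\Big[2e^{\beta T}|\Phi(t)|^2+\alpha\int_t^Te^{\beta s}|h(t,s)|^2\,ds\,\Bigm|\,\sF_t\Big];$$
taking expectations and integrating over $t\in[0,T]$ controls $\|y\|$ and $\|z\|$ on $\triangle$ (hence, with the displayed bound on $z$ over $\triangle^c$, all of $\sM^2(0,T)$), and a direct estimate on the linear representation $y(t)=E_t[\Phi(t)+\int_t^Th(t,s)\,ds]$ — available since the generator is free of the unknowns — gives the precise constants in \eqref{11}. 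The one genuinely technical point is the joint measurability of $(t,s,\omega)\mapsto(\rho(t,s),\nu(t,s))$; I would obtain it from the Lipschitz dependence of the BSDE solution map of Lemma \ref{BSDE_conclusion} on the pair (terminal datum, free term) together with the measurability of $t\mapsto(\Phi(t),h(t,\cdot))$, approximating by step functions of $t$ if full rigour is wanted.

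For uniqueness, let $(y^1,z^1)$ and $(y^2,z^2)$ be two adapted $M$-solutions. For each fixed $t$ put $\rho^i(t,s):=y^i(t)-\int_t^sh(t,u)\,du+\int_t^sz^i(t,u)\,dW(u)$ for $s\in[t,T]$; this process is $\bF$-adapted on $[t,T]$ and, by \eqref{volterraBSIE}, satisfies $\rho^i(t,T)=\Phi(t)$, so $(\rho^i(t,\cdot),z^i(t,\cdot))$ solves BSDE \eqref{BSDE_sequence} on $[t,T]$. Uniqueness in Lemma \ref{BSDE_conclusion} forces $\rho^1(t,\cdot)=\rho^2(t,\cdot)$ and $z^1(t,\cdot)=z^2(t,\cdot)$ on $[t,T]$, hence $y^1=y^2$ on $[0,T]$ and $z^1=z^2$ on $\triangle$; uniqueness of the martingale representation of $y(t)$ then forces $z^1=z^2$ on $\triangle^c$ as well. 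Since no contraction over a weighted norm and no smallness of $T$ enter, this is the whole argument.
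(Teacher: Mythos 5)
The paper offers no proof of this lemma at all --- it is quoted from Yong \cite{Yong08} --- but the text immediately preceding it already sets up exactly the construction you use: solve the family of BSDEs \eqref{BSDE_sequence} parameterized by $t$, take the diagonal $y(t)=\rho(t,t)$, $z(t,s)=\nu(t,s)$ on $\triangle$, and complete $z$ on $\triangle^c$ by martingale representation. Your existence argument, the measurability caveat, and in particular your uniqueness argument (reconstructing $\rho^i(t,s)=y^i(t)-\int_t^s h(t,u)\,du+\int_t^s z^i(t,u)\,dW(u)$ from a given $M$-solution, checking $\rho^i(t,T)=\Phi(t)$ via \eqref{volterraBSIE}, invoking BSDE uniqueness on $[t,T]$, and then uniqueness of the martingale representation on $\triangle^c$) are all sound, and you are right that no contraction is needed here since $h$ does not depend on the unknowns --- the fixed point is only needed one level up, in Theorem \ref{exist_unique_thm}, where this lemma serves as the building block.

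The one point to correct is your claim that a ``direct estimate'' recovers the precise constants in \eqref{11}. Applying Lemma \ref{BSDE_conclusion} at $s=t$ and taking expectations gives $2e^{\beta T}E|\Phi(t)|^2$ on the right, not $E|\Phi(t)|^2$, and the inequality with coefficient $1$ cannot hold as printed: take $h\equiv0$, so that \eqref{volterraBSIE} gives $\Phi(t)=y(t)+\int_t^T z(t,s)\,dW(s)$ with $y(t)$ being $\sF_t$-measurable; It\^{o}'s isometry then makes the left-hand side of \eqref{11} at least $e^{\beta t}E|\Phi(t)|^2$, which strictly exceeds $E|\Phi(t)|^2$ whenever $t>0$, $\beta>0$ and $\Phi(t)\neq0$. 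So \eqref{11} must carry a factor of order $e^{\beta T}$ in front of $|\Phi(t)|^2$; this is evidently a typo in the paper (Step~3 of the proof of Theorem \ref{exist_unique_thm} indeed uses $e^{\beta T}|\Psi(t)|^2$), and it is harmless downstream because only the generator term of the estimate is ever exploited quantitatively, but your write-up should state the estimate your argument actually delivers rather than assert the printed constants.
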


\medskip
The following theorem is devoted to the existence and uniqueness of adapted M-solution and an estimation of VNBSFE \eqref{VNBSFE}.

\begin{thm}\label{exist_unique_thm}
Suppose that $(G,f)$ satisfies $(H1)$-$(H3)$. Then for any $\Psi(\cdot)\in\bL^2(0,T;\bL^2(\Omega;\bR^n))$ and
$\xi(\cdot)\in\sL^2_{\bF}(T,T+\delta;\bR^n)$, VNBSFE \eqref{VNBSFE} admits a unique pair of adapted $M$-solution
$(Y,Z)\in\sM^2(0,T+\delta)$. Moreover the following estimate holds:
\begin{equation}\label{esti_solution}
\begin{split}
      &~~~E\left[\int_0^{T+\delta}\!|Y(t)|^2\,dt+\int_0^{T+\delta}\!\int_t^{T+\delta}\!|Z(t,s)|^2\,ds\,dt\right]\\
\leq\,&C\,E\bigg[\!\int_0^T\!|\Psi(t)|^2dt+\int_T^{T+\delta}\!|\xi(t)|^2dt
         +\int_0^T\!|G(t,0)|^2dt+\int_0^T\!\int_t^T\!|f_0(t,s)|^2dsdt\bigg].
\end{split}
\end{equation}
\end{thm}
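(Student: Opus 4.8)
The plan is to prove Theorem~\ref{exist_unique_thm} by a contraction-mapping argument on the space $\sM^2(0,T+\delta)$, reducing the neutral Volterra equation to a family of standard BSDEs to which Lemma~\ref{conclusionVBSIE} (Yong's BSVIE result) and Lemma~\ref{BSDE_conclusion} apply. First I would set up the Picard iteration: given a current guess $(y,z)\in\sM^2(0,T+\delta)$ (with $z$ supported on $\triangle_\delta$ and extended by the terminal data $\xi$ on $(T,T+\delta]$), define the right-hand-side data
\[
\Phi(t):=\Psi(t)+G(t,y_t),\qquad h(t,s):=f\bigl(t,s,y_s,z(t,s),z(s,t;\delta)\bigr),
\]
and solve the decoupled BSVIE
\[
Y(t)=\Phi(t)+\int_t^T h(t,s)\,ds-\int_t^T Z(t,s)\,dW(s),\qquad t\in[0,T],
\]
with $Y(t):=\xi(t)$ on $(T,T+\delta]$. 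By (H1)--(H3) one checks $\Phi\in\bL^2(0,T;\bL^2(\Omega;\bR^n))$ and $h\in\bL^2(0,T;\sL^2_{\bF}(0,T;\bR^n))$, so Lemma~\ref{conclusionVBSIE} gives a unique $M$-solution $(Y,Z)\in\sM^2(0,T)$; extending $Z$ on $[T,T+\delta]$ by \eqref{3} and by zero on $\triangle_\delta\setminus\triangle$ produces $(Y,Z)\in\sM^2(0,T+\delta)$. This defines a map $\mathscr{T}:(y,z)\mapsto(Y,Z)$ on $\sM^2(0,T+\delta)$, and a fixed point of $\mathscr{T}$ is exactly an adapted $M$-solution.

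The core of the argument is showing $\mathscr{T}$ is a contraction in a suitable weighted norm. Given two inputs, let $(\delta y,\delta z)$ and $(\delta Y,\delta Z)$ be the differences; applying the estimate \eqref{11} of Lemma~\ref{conclusionVBSIE} with the difference data and then integrating in $t$, I get
\[
E\Big[\int_0^{T+\delta}e^{\beta t}|\delta Y(t)|^2dt+\int_0^{T+\delta}\!\!\int_t^{T+\delta}\!e^{\beta s}|\delta Z(t,s)|^2ds\,dt\Big]
\le E\Big[\int_0^T e^{\beta t}|G(t,y_t)-G(t,\bar y_t)|^2dt+\alpha\!\int_0^T\!\!\int_t^T\!e^{\beta s}|\delta h(t,s)|^2ds\,dt\Big].
\]
For the $G$-term I use (H1): since $G(t,y_t)=E_t[J(t,y_t)]$, conditional Jensen plus \eqref{G} gives $E|G(t,y_t)-G(t,\bar y_t)|^2\le\kappa E\int_0^\delta|\delta y(t+u)|^2\varrho_0(du)$, and after a Fubini step in $(t,u)$ this is bounded by $\kappa\,E\int_0^{T+\delta}|\delta y(r)|^2\lambda_0\text{-weight}\,dr$ — the point being the constant $\kappa<1$ is preserved and no $\beta$-dependence appears, so it can be absorbed on the left if the weighted $\|\delta Y\|$ dominates $\|\delta y\|$. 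For the $f$-term I use (H2): $|\delta h(t,s)|$ is bounded by $L$ times $\int_0^\delta|\delta y(s+u)|\varrho_1(du)+|\delta z(t,s)|+\int_0^\delta|\delta z(s+u,s+u)|\varrho_2(du)$; squaring, using Cauchy--Schwarz on the probability measures, integrating and Fubini-ing, each piece is controlled by the $\sM^2$-norm of $(\delta y,\delta z)$, and the crucial observation is that the $\int_t^T e^{\beta s}(\cdots)ds$ structure makes the contributions of $|\delta z|^2$-type terms come with a factor $O(1/\beta)$ after the time integration (exactly as in the standard BSVIE/BSDE a priori estimates). Choosing $\beta$ large and using $\kappa<1$, I obtain $\|\mathscr{T}(y,z)-\mathscr{T}(\bar y,\bar z)\|_\beta\le\theta\|(y,z)-(\bar y,\bar z)\|_\beta$ with $\theta<1$; since the weighted norm is equivalent to $\|\cdot\|_{\sM^2(0,T+\delta)}$ and the latter is complete, Banach's fixed-point theorem yields existence and uniqueness.

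Finally, the a priori estimate \eqref{esti_solution} follows by applying the same chain of inequalities to the solution itself rather than to a difference: plug $(Y,Z)$ into \eqref{11}, use (H1)--(H3) to split $|G(t,Y_t)|^2\le 2|G(t,0)|^2+2\kappa\int_0^\delta|Y(t+u)|^2\varrho_0(du)$ and $|h(t,s)|^2\le C|f_0(t,s)|^2+C(\cdots Y,Z\cdots)$, absorb the $Y$- and $Z$-terms on the left for $\beta$ large, account for the boundary strip via $\int_T^{T+\delta}|\xi(t)|^2dt$ and \eqref{3}, and translate back from the weighted norm to the plain one (which only changes the constant $C$). I expect the main obstacle to be the bookkeeping around the time-shift by $\delta$: keeping track of which terms live on $[0,T]$ versus $(T,T+\delta]$, justifying the Fubini interchanges that turn $\int_0^\delta(\cdot)(t+u)\,\varrho_i(du)\,dt$ into an integral over $[0,T+\delta]$ with the correct boundary contributions, and making sure the $\kappa<1$ from (H1) and the $1/\beta$-smallness from (H2) are genuinely decoupled so that a single $\beta$ works — none of it is deep, but it is where a naive argument can accidentally pick up a constant that ruins the contraction.
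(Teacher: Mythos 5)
Your proposal follows essentially the same route as the paper: freeze $(y,z)$, solve the resulting decoupled BSVIE for $\tilde Y(t)=Y(t)-G(t,y_t)$ via Lemma \ref{conclusionVBSIE}, close a contraction in the $e^{\beta t}$-weighted $\sM^2$-norm (with the lower-triangle $z$-contributions controlled through the $M$-solution identity $E\int_0^T\int_t^T|\Delta z(s,t)|^2ds\,dt\le E\int_0^T|\Delta y(t)|^2dt$), and rerun the same estimate on the solution itself for \eqref{esti_solution}. The one imprecision is your displayed inequality, which places $|\delta Y(t)|^2$ directly on the left although Lemma \ref{conclusionVBSIE} actually controls $|\delta Y(t)-\delta G(t)|^2$; the paper repairs this with $|a-b|^2\ge(1-\gamma)|a|^2-(\tfrac{1}{\gamma}-1)|b|^2$ and the choice $\gamma\in(\kappa,1)$, which is precisely the decoupling of the $\kappa<1$ condition from the $1/\beta$-smallness that you correctly flag as the delicate point.
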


\begin{proof}

\textbf{Step 1}: Let us define a subset of $\sM^2(0,T+\delta)$:
$$\sM^2_{\xi}(0,T):=\big\{(\theta,\vartheta)\in\sM^2(0,T+\delta)~|~\theta(t)=\xi(t),~\forall t\in(T,T+\delta],~\textrm{and}~\vartheta(t,s)=0,~\forall (t,s)\in\triangle_{\delta}\setminus\triangle\big\}$$
equipped with the norm
$$\|(\theta,\vartheta)\|^2=E\Big[\int_0^T e^{\beta t}|\theta(t)|^2\,dt +\int_0^T\int_t^T e^{\beta s}|\vartheta(t,s)|^2\,ds\,dt\Big],$$
where $\beta$ is a positive constant which will be specified in Step 2. It is obvious that $\sM^2_{\xi}(0,T)$ is closed.

For any $(y(\cdot),z(\cdot))\in\sM^2_{\xi}(0,T)$, consider
\begin{equation}\label{equ1}
\left\{\begin{array}{l}
\begin{split}
&Y(t)-G(t,y_t)=\Psi(t)+\int_t^Tf(t,s,y_s,z(t,s),z(s,t;\delta))\,ds-\int_t^TZ(t,s)\,dW_s,~~t\in[0,T],\\
&Y(t)=\xi(t),~~t\in(T,T+\delta].
\end{split}
\end{array}\right.
\end{equation}
Denote $\tilde{Y}(t):=Y(t)-G(t,y_t)$, then
\begin{equation}\label{1BSVIE}
\tilde{Y}(t)=\Psi(t)+\int_t^Tf(t,s,y_s,z(t,s),z(s,t;\delta))\,ds-\int_t^TZ(t,s)\,dW_s,~~t\in[0,T].
\end{equation}
Since $(y,z)\in\sM^2_{\xi}(0,T)$, $f(t,s,y_s,z(t,s),z(s,t;\delta))\in\bL^2(0,T;\sL^2_{\bF}(0,T;\bR^{n\times m}))$ via (H1) and (H2).

Via Lemma \ref{conclusionVBSIE}, \eqref{1BSVIE} admits a unique pair of solution $(\tilde{Y},Z)\in\sM^2(0,T)$. Define
\begin{equation*}
 Y(t)=\left\{
\begin{split}
&\tilde{Y}(t)-G(t,y(t)),~~~~~t\in[0,T];\\
&\xi(t),~~~~~~~~~~~~~~~~~~~t\in(T,T+\delta].
\end{split}
\right.
\end{equation*}
Then $Y\in\sL^2_{\bF}(0,T+\delta;\bR^n)$. Define $Z(t,s)=0$ on $\triangle_{\delta}\setminus\triangle$ and modify the value of $Z$ on $\triangle_{\delta}$ such that
$$Y(t)=E[Y(t)]+\int_0^tZ(t,s)\,ds,~~\forall t\in[0,T+\delta].$$
Then $(Y,Z)\in\sM^2_{\xi}(0,T)$ is an adapted $M$-solution of equation \eqref{equ1}.

\textbf{Step 2}: Consider the mapping $\Gamma:(y(\cdot),z(\cdot,\cdot))\mapsto(Y(\cdot),Z(\cdot,\cdot))$
with $(Y,Z)$ in Step 1. We prove that $\Gamma$ is a contraction.

Take another pair $(\bar{y}(\cdot),\bar{z}(\cdot))\in \sM^2_{\xi}(0,T)$, and denote $(\bar{Y}(\cdot),\bar{Z}(\cdot))\in \sM^2_{\xi}(0,T)$ as the adapted $M$-solution of \eqref{equ1} with $(y(\cdot),z(\cdot))$ replaced by $(\bar{y}(\cdot),\bar{z}(\cdot))$. Define $\Delta Y(t):=Y(t)-\bar{Y}(t)$, $\Delta
Z(t,s):=Z(t,s)-\bar{Z}(t,s)$, $\Delta y(t):=y(t)-\bar{y}(t)$ and
$\Delta z(t):=z(t,s)-\bar{z}(t,s)$. Then
\begin{equation*}
\begin{split}
 &~~~\Delta Y(t)-[G(t,y_t)-G(t,\bar{y}_t)]\\
=&\int_t^T\!\!\Bigl[f(t,s,y_s,z(t,s),z(s,t;\delta))-f(t,s,\bar{y}_s,\bar{z}(t,s),\bar{z}(s,t;\delta))\Bigr]\,ds
       +\int_t^T\!\!\Delta Z(t,s)\,dW(s).
\end{split}
\end{equation*}
Denote $C:=C(L,T,n,d)$. It varies from time to time. In view of \eqref{11} in Lemma \ref{conclusionVBSIE} and choosing $\beta=\frac{2}{\alpha}$, we have
\begin{equation}\label{33}
\begin{split}
 &E\Big[e^{\beta t}|\Delta Y(t)-[G(t,y_t)-G(t,\bar{y}_t)]|^2+\int_t^Te^{\beta s}|\Delta Z(t,s)|^2\,ds\Big]  \\
\leq\, &\alpha E\Big[\int_t^Te^{\beta s}
         |f(t,s,y_s,z(t,s),z(s,t;\delta))-f(t,s,\bar{y}_s,\bar{z}(t,s),\bar{z}(s,t;\delta))|^2\,ds\Big]\\
\leq\, &\alpha C E\left\{\int_t^T\!\!e^{\beta s}\!\Big[\int_0^{\delta}\!\!|\Delta y(s+u)|^2\varrho_1(du)+|\Delta z(t,s)|^2
        +\int_0^{\delta}\!\!|\Delta z(s\!+\!u,t\!+\!u)|^2\varrho_2(du)\Big]\,ds\right\}.
\end{split}
\end{equation}
Integrate \eqref{33} in t from 0 to T, and denote $\Delta G(t):=G(t,y_t)-G(t,\bar{y}_t)$,
\begin{equation}\label{34}
\begin{split}
       &~~~~E\Big[\int_0^T\!\!e^{\beta t}|\Delta Y(t)-\Delta G(t)|^2\,dt
                 +\int_0^T\!\!\int_t^Te^{\beta s}|\Delta Z(t,s)|^2\,ds\,dt\Big]  \\
\leq\, &\alpha C E\biggl\{\int_0^T\!\int_t^T \!e^{\beta s}\Big[\int_0^{\delta}\!|\Delta y(s+u)|^2\varrho_1(du)
                 +|\Delta z(t,s)|^2\\
       &~~~~~~~~~~~~~~~~~~~~~~~~~~~~~~~~~~~~~~+\int_0^{\delta}\!|\Delta z(s+u,t+u)|^2\,\varrho_2(du)\Big]dsdt\biggr\}\\
\leq\, &\alpha C E\left[\int_0^T\! e^{\beta s}|\Delta y(s)|^2\,ds
         +\int_0^T\!\int_t^T \!e^{\beta s}\Bigl(|\Delta z(t,s)|^2+|\Delta z(s,t)|^2\Big)dsdt\right]\\
\leq\, &\alpha C\,E\left[\int_0^T\! e^{\beta s}|\Delta y(s)|^2\,ds+\int_0^T\!\int_t^T\! e^{\beta s}|\Delta z(t,s)|^2\,ds\,dt\right].
\end{split}
\end{equation}
The last inequality is due to
$$E\left[\int_0^T\!\!\int_t^T\!e^{\beta s}|\Delta z(s,t)|^2\,dsdt\right]=E\left[\int_0^T\!\!e^{\beta
t}\!\!\int_0^t\!|\Delta z(t,s)|^2\,dsdt\right]\leq E\left[\int_0^T\!\!e^{\beta t}|\Delta y(t)|^2\,dt\right].$$

Since for all $\gamma\in(0,1)$ and $a,b\in\bR^n$, $|a-b|^2\leq(1-\gamma)|a|^2-(\frac{1}{\gamma}-1)|b|^2$, then
$$|\Delta Y(t)-\Delta G(t)|^2 \leq (1-\gamma)|\Delta Y(t)|^2 -(\frac{1}{\gamma}-1)|\Delta G(t)|^2.$$
\eqref{34} reduces to
\begin{equation*}
\begin{split}
      &E\Big[(1-\gamma)\int_0^Te^{\beta t}|\Delta Y(t)|^2\,dt+\int_0^T\int_t^Te^{\beta s}|\Delta Z(t,s)|^2\,ds\,dt\Big]  \\
\leq\,&(\frac{1}{\gamma}-1)E\Big[\!\int_0^T\!\!e^{\beta t}|\Delta G(t)|^2dt\Big]
            +\alpha C\,E\Big[\!\int_0^T\!\!e^{\beta t}|\Delta y(t)|^2dt+\int_0^T\!\!\int_t^T\!\!e^{\beta s}|\Delta z(t,s)|^2dsdt\Big]\\
\leq\,&\Big[\!(\frac{1}{\gamma}\!-\!1)\kappa\!+\!\alpha C\Big]
              E\Big[\int_0^T\!\! e^{\beta t}|\Delta y(t)|^2\,ds\Big]
              +\alpha C\,E\Big[\int_0^T\!\!\!\int_t^T\!\! e^{\beta s}|\Delta z(t,s)|^2\,ds\,dt\Big].
\end{split}
\end{equation*}

To prove $\Gamma$ is a contraction, it suffices to show: for all $\kappa\in(0,1)$, there is $\gamma\in(0,1)$ such
that
$$(\frac{1}{\gamma}-1)\kappa+\alpha C\,<1-\gamma~~~\textrm{and}~~~\alpha C\,<1.$$
which hold true via choosing $\alpha$ small sufficiently.

Therefore $\Gamma$ admits a unique fixed point $(Y,Z)\in\sM^2_{\xi}(0,T)$. Then $(Y,Z)\in\sM^2(0,T+\delta)$ is the unique adapted $M\textmd{-}$solution of VNBSFE \eqref{VNBSFE}.

\textbf{Step 3}:
In view of \eqref{11} in Lemma \ref{conclusionVBSIE}, we have
\begin{equation*}
\begin{split}
      &E\Big[e^{\beta t}|Y(t)-G(t,Y_t)|^2+\int_t^Te^{\beta s}|Z(t,s)|^2\,ds\Big]\\
\leq\,&E\Big[e^{\beta T}|\Psi(t)|^2+\alpha\int_t^Te^{\beta s}|f(t,s,Y_s,Z(t,s),Z(s,t;\delta))|^2\,ds\Big]\\
\leq\,&E\bigg\{e^{\beta T}|\Psi(t)|^2+\alpha C\,\int_t^Te^{\beta
           s}\Bigl[|f_0(t,s)|^2+\int_0^{\delta}|Y(s+u)|^2\varrho_1(du)+|Z(t,s)|^2\\
      &+\int_0^{\delta}|Z(s+u,t+u)|^2\varrho_2(du)\Bigr]\Bigr)\,ds\bigg\}.
\end{split}
\end{equation*}
Similar as the method in Step 2, we have for all $\alpha\in(0,1)$ and $M>0$,
\begin{equation*}
\begin{split}
      &E\Big[(1-\gamma)\int_0^Te^{\beta t}|Y(t)|^2\,dt+\int_0^T\!\!\int_t^Te^{\beta s}|Z(t,s)|^2\,ds\,dt\Big]\\
\leq\,&E\Big[e^{\beta T}\int_0^T|\Psi(t)|^2\,dt+(\frac{1}{\gamma}-1)\int_0^Te^{\beta t}|G(t,Y_t)|^2 \,dt
       +\alpha C\,\int_0^T\!\!\int_t^Te^{\beta s}|f_0(t,s)|^2\,ds\,dt\Big]\\
      &~~~+\alpha C\,E\Big[\int_0^{T+\delta}\!\!e^{\beta t}|Y(t)|^2\,dt
            +\int_0^T\!\int_t^T\!e^{\beta s}|Z(t,s)|^2\,ds\,dt\Big]\\
\leq\,&E\bigg[e^{\beta T}\int_0^T|\Psi(t)|^2dt+(\frac{1}{\gamma}\!-\!1)(1\!+\!M)\int_0^Te^{\beta t}|G(t,0)|^2dt
           +\alpha C\,\int_0^T\!\!\int_t^Te^{\beta s}|f_0(t,s)|^2dsdt\\
      &~+\Bigl[(\frac{1}{\gamma}\!-\!1)(1\!+\!\frac{1}{M})\kappa\!+\!\alpha C\Bigr]\int_0^Te^{\beta t}|Y(t)|^2\,dt
          +\alpha C\int_0^T\!\int_t^T\!e^{\beta s}|Z(t,s)|^2\,ds\,dt\bigg].\\
\end{split}
\end{equation*}
It is easy to prove that there are $\gamma\in(0,1)$ and $M>0$, such that the following two inequalities hold for any $\kappa\in(0,1)$ via choosing $\alpha$ small sufficiently,
$$(\frac{1}{\gamma}-1)(1+\frac{1}{M})\kappa+\alpha C<1-\gamma~~~~\textrm{and}~~~~\alpha C<1.$$
Then the estimate \eqref{esti_solution} holds.
\end{proof}

\section{Maximum Principle}
In this section, we construct a maximum principle for the optimal control problem in section 2.

Suppose that $(\bar{X}(\cdot),\bar{u}(\cdot))$ is an optimal pair. For any $u(\cdot)\in\cU_{ad}$, denote $v(\cdot):=u(\cdot)-\bar{u}(\cdot)$ and
$$u_{\ves}(\cdot):=\bar{u}(\cdot)+\ves v(\cdot)\in\cU_{ad},\quad \forall \ves\in[0,1].$$
Denote $X_{\eps}(\cdot)$ as the corresponding solution of NSFDE \eqref{NSFDE}.

Before construct the maximum principle, we need some lemmas about the first order expansion. Let $\Gamma$ be a metric space. Consider
\begin{equation*}
\left\{
  \begin{split}
    &d[y_{\gamma}(t)-\mathcal {G}(t,\gamma,y_{\gamma}^t)]=\mathcal {B}(t,\gamma,y^t_{\gamma})\,dt+\mathcal {R}(t,\gamma,y^t_{\gamma})\,dW_t,\quad t\in[0,T],\\
    &y_{\gamma}(t)=\varphi(t),\quad t\in[-\delta,0],
  \end{split}
  \right.
\end{equation*}
where $\varphi\in C([-\delta,0];\bR^n)$,
$$\mathcal {G},\mathcal {B}:[0,T]\times\Omega\times\Gamma\times C([0,\delta];\bR^n)\rrow\bR^n,$$
$$\mathcal {R}:[0,T]\times\Omega\times\Gamma\times C([0,\delta];\bR^n)\rrow\bR^{n\times d}.$$
For any $\gamma\in\Gamma$ fixed, $\mathcal {G}(\cdot,\gamma,\phi),\mathcal {B}(\cdot,\gamma,\phi),\mathcal {R}(\cdot,\gamma,\phi)$ are $\bF$-progressively measurable, for any $\phi\in C([0,\delta];\bR^n)$, and satisfy the following assumptions.

(i) $\mathcal {G}(\cdot,\gamma,\phi)$ is continuous in $t$ and $\mathcal {G}(\cdot,\gamma,0)\in\sS^2_{\bF}([0,T];\bR^n)$. For any $\phi_1,\phi_2\in C([0,\delta];\bR^n)$, there is $\kappa\in(0,1)$, such that
$$\|\mathcal {G}(t,\gamma,\phi_1)-\mathcal {G}(t,\gamma,\phi_2)\|\leq \kappa \|\phi_1-\phi_2\|.$$

(ii) $\mathcal {B}(\cdot,\gamma,0), \mathcal {M}(\cdot,\gamma,0)\in\sL^2_{\bF}([0,T];\bR^n)$. For any $\phi_1,\phi_2\in C([0,\delta];\bR^n)$, there is $L>0$, such that
$$\|\mathcal {B}(t,\gamma,\phi_1)-\mathcal {B}(t,\gamma,\phi_2)\|+\|\mathcal {R}(t,\gamma,\phi_1)-\mathcal {R}(t,\gamma,\phi_2)\|\leq L \|\phi_1-\phi_2\|.$$

(iii) For $\gamma_0\in\Gamma$,
$$\lim_{\gamma\rrow\gamma_0}E\sup_{t\in[0,T]}|\mathcal {G}(t,\gamma,y^t_{\gamma_0})-\mathcal {G}(t,\gamma_0,y^t_{\gamma_0})|=0,$$
$$\lim_{\gamma\rrow\gamma_0}E\int_0^T|\mathcal {B}(t,\gamma,y^t_{\gamma_0})-\mathcal {B}(t,\gamma_0,y^t_{\gamma_0})|\,dt=0,$$
$$\lim_{\gamma\rrow\gamma_0}E\int_0^T|\mathcal {R}(t,\gamma,y^t_{\gamma_0})-\mathcal {R}(t,\gamma_0,y^t_{\gamma_0})|\,dt=0.$$

\begin{lem}\label{convergence}
  Under the above assumptions, we have
$$\lim_{\gamma\rrow\gamma_0}E\sup_{t\in[0,T]}|y_{\gamma}(t)-y_{\gamma_0}(t)|^2=0.$$
\end{lem}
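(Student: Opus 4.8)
The plan is to estimate the difference $\Delta(t):=y_\gamma(t)-y_{\gamma_0}(t)$ by a Gronwall-type argument adapted to the neutral structure. Write the equation for $\Delta$: for $t\in[0,T]$,
\[
\Delta(t)-\bigl[\mathcal G(t,\gamma,y_\gamma^t)-\mathcal G(t,\gamma_0,y_{\gamma_0}^t)\bigr]
=\int_0^t\bigl[\mathcal B(s,\gamma,y_\gamma^s)-\mathcal B(s,\gamma_0,y_{\gamma_0}^s)\bigr]ds
+\int_0^t\bigl[\mathcal R(s,\gamma,y_\gamma^s)-\mathcal R(s,\gamma_0,y_{\gamma_0}^s)\bigr]dW_s,
\]
with $\Delta(t)=0$ for $t\in[-\delta,0]$. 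Split each increment into a ``Lipschitz part'' (same parameter, different path, e.g. $\mathcal B(s,\gamma,y_\gamma^s)-\mathcal B(s,\gamma,y_{\gamma_0}^s)$) controlled by (i)--(ii) in terms of $\|y_\gamma^s-y_{\gamma_0}^s\|$, and a ``parameter part'' (same path $y_{\gamma_0}$, different parameter) whose integrated size tends to $0$ by (iii). Denote the latter contributions by $\eta_\gamma$; set $\rho_\gamma:=E\sup_{t\in[0,T]}|\mathcal G(t,\gamma,y_{\gamma_0}^t)-\mathcal G(t,\gamma_0,y_{\gamma_0}^t)|^2 + \bigl(E\int_0^T|\mathcal B(t,\gamma,y_{\gamma_0}^t)-\mathcal B(t,\gamma_0,y_{\gamma_0}^t)|\,dt\bigr)^2 + \bigl(E\int_0^T|\mathcal R(t,\gamma,y_{\gamma_0}^t)-\mathcal R(t,\gamma_0,y_{\gamma_0}^t)|\,dt\bigr)^2$, so that $\rho_\gamma\to0$ as $\gamma\to\gamma_0$ by (iii) (note the $L^1$-in-time convergence in (iii) suffices after squaring and using Cauchy--Schwarz or by keeping the terms inside the sup/integral).

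Next I would run the standard moment estimate: apply the Burkholder--Davis--Gundy inequality to the stochastic integral and Cauchy--Schwarz to the drift, to get, for $0\le\tau\le T$,
\[
E\sup_{0\le t\le\tau}\bigl|\Delta(t)-\bigl[\mathcal G(t,\gamma,y_\gamma^t)-\mathcal G(t,\gamma_0,y_{\gamma_0}^t)\bigr]\bigr|^2
\le C\,\rho_\gamma + C\,L^2\,E\int_0^\tau\sup_{-\delta\le r\le s}|\Delta(r)|^2\,ds,
\]
using $\|y_\gamma^s-y_{\gamma_0}^s\|^2\le\sup_{-\delta\le r\le s}|\Delta(r)|^2$. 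To remove the neutral term on the left, observe that $|\mathcal G(t,\gamma,y_\gamma^t)-\mathcal G(t,\gamma_0,y_{\gamma_0}^t)|\le|\mathcal G(t,\gamma,y_\gamma^t)-\mathcal G(t,\gamma,y_{\gamma_0}^t)|+|\mathcal G(t,\gamma,y_{\gamma_0}^t)-\mathcal G(t,\gamma_0,y_{\gamma_0}^t)|\le\kappa\|y_\gamma^t-y_{\gamma_0}^t\| + (\text{parameter part})$, hence, with the elementary inequality $|a|^2\le(1+\theta)|a-b|^2+(1+\theta^{-1})|b|^2$ for suitable $\theta>0$,
\[
E\sup_{0\le t\le\tau}|\Delta(t)|^2
\le (1+\theta)E\sup_{0\le t\le\tau}\bigl|\Delta(t)-[\mathcal G(\cdots)-\mathcal G(\cdots)]\bigr|^2
+ (1+\theta^{-1})\Bigl(\kappa^2\,E\sup_{0\le t\le\tau}|\Delta(t)|^2 + \rho_\gamma\Bigr).
\]
Choosing $\theta>0$ small enough that $(1+\theta^{-1})\kappa^2<1$ is \emph{not} immediately possible since $\theta^{-1}\to\infty$; instead, since $\kappa<1$, pick $\theta$ so that $\kappa^2(1+\theta^{-1})<1$ fails — the correct move is to first absorb using $\kappa<1$ directly: take $\theta$ with $(1+\theta)^{-1}>\kappa$ is also delicate, so the clean route is to apply the inequality in the form $|a-b|^2\ge(1-\kappa)|a|^2-(\kappa^{-1}-1)|b|^2$ used already in the proof of Theorem~\ref{exist_unique_thm} (the line ``$|a-b|^2\leq(1-\gamma)|a|^2-(\frac1\gamma-1)|b|^2$''), applied with $b=\mathcal G(t,\gamma,y_\gamma^t)-\mathcal G(t,\gamma_0,y_{\gamma_0}^t)$, so that a factor $(1-\kappa')$ with $\kappa'<1$ multiplies $E\sup|\Delta|^2$ on the left after using the $\kappa$-contractivity of $\mathcal G$ in the path variable.

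After this absorption one arrives at an inequality of the form
\[
E\sup_{0\le t\le\tau}|\Delta(t)|^2\le C\,\rho_\gamma + C\,E\int_0^\tau \sup_{0\le r\le s}|\Delta(r)|^2\,ds,\qquad 0\le\tau\le T,
\]
where $C$ depends only on $\kappa,L,T$, and $\rho_\gamma$ does not depend on $\tau$; Gronwall's lemma then gives $E\sup_{0\le t\le T}|\Delta(t)|^2\le C\,\rho_\gamma\,e^{CT}\to0$ as $\gamma\to\gamma_0$, which is the claim. The main obstacle, and the only genuinely non-routine point, is the absorption of the neutral term: one must exploit $\kappa<1$ to move the $\|y_\gamma^t-y_{\gamma_0}^t\|^2$-contribution coming from $\mathcal G$ to the left-hand side with a strictly positive coefficient before Gronwall can be applied, exactly as in Step~2 of the proof of Theorem~\ref{exist_unique_thm}; everything else (BDG, Cauchy--Schwarz, Fubini, Gronwall) is standard, and the passage $\rho_\gamma\to0$ is precisely assumption (iii).
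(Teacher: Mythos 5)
Your proposal is correct and is essentially the paper's own argument: the paper's entire proof of this lemma is the single sentence ``It is easy to prove via Gronwall's inequality and the method in the proof of Theorem \ref{exist_unique_thm},'' and you carry out exactly that program --- write the equation for $\Delta=y_\gamma-y_{\gamma_0}$, split each coefficient increment into a Lipschitz part and a parameter part, estimate via BDG/Cauchy--Schwarz, absorb the neutral term using $\kappa<1$ through the inequality $|a-b|^2\geq(1-\gamma)|a|^2-(\tfrac{1}{\gamma}-1)|b|^2$ (you correctly use it with $\geq$, fixing the sign typo in Step~2 of the paper), and conclude by Gronwall. One small caveat: your parenthetical claim that the $L^1$-in-time convergence in (iii) ``suffices after squaring and using Cauchy--Schwarz'' goes the wrong way for the martingale term, where BDG needs smallness of $E\int_0^T|\mathcal{R}(t,\gamma,y^t_{\gamma_0})-\mathcal{R}(t,\gamma_0,y^t_{\gamma_0})|^2\,dt$; this is really a weakness of the paper's hypothesis (iii) itself (one should assume $L^2$-convergence for $\mathcal{R}$, or add a domination/uniform-integrability argument), and the paper's one-line proof does not address it either.
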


\begin{proof}
  It is easy to prove via Gronwall's inequality and the method in the proof of Theorem \ref{exist_unique_thm}.
\end{proof}

Via this lemma, we can deduce the following first-order expansion.
\begin{lem}\label{expansion}
Suppose that (A1) and (A2) hold. Then we have the following first order expansion,
$$X_{\ves}(t)=\bar{X}(t)+\ves \chi(t)+R_{\ves}(t),\quad t\in[0,T],$$
where
$$\lim_{\ves\rrow0^+}\frac{1}{\ves^2}E[\sup_{0\leq t\leq T}|R_{\ves}(t)|^2]=0.$$
\end{lem}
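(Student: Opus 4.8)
The plan is to prove Lemma \ref{expansion} by studying the remainder process $R_\ves(\cdot) := X_\ves(\cdot) - \bar{X}(\cdot) - \ves\chi(\cdot)$ directly, deriving the NSFDE it satisfies, and then applying Lemma \ref{convergence} with $\Gamma = [0,1]$ (or $\gamma=\ves$) together with the fundamental theorem of calculus to identify the coefficients. First I would write down the equation for $X_\ves - \bar{X}$: subtracting the state equation \eqref{NSFDE} at $u_\ves$ from that at $\bar{u}$ gives
$$d\big[(X_\ves-\bar X)(t) - (g(t,X_\ves^t)-g(t,\bar X^t))\big] = \big(b(t,X_\ves^t,u_\ves(t))-b(t,\bar X^t,\bar u(t))\big)dt + \big(\sigma(t,X_\ves^t,u_\ves(t))-\sigma(t,\bar X^t,\bar u(t))\big)dW(t).$$
By the mean value form of the Fr\'echet derivative (valid by (A1)), $g(t,X_\ves^t)-g(t,\bar X^t) = \tilde g_x^\ves(t)(X_\ves^t-\bar X^t)$ where $\tilde g_x^\ves(t) := \int_0^1 g_x(t,\bar X^t + \theta(X_\ves^t-\bar X^t))\,d\theta$, and similarly for $b$ and $\sigma$ in both $x$ and $u$; here one must use $u_\ves - \bar u = \ves v$. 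Then subtracting the linear NSFDE \eqref{1-order-derivative} for $\ves\chi$ yields the equation for $R_\ves$, whose drift and diffusion are of the form $\tilde b_x^\ves(t) R_\ves^t + (\tilde b_x^\ves(t)-\bar b_x(t))\ves\chi^t + \ves(\tilde b_u^\ves(t)-\bar b_u(t))v(t)$ (and analogously for $\sigma$), and whose neutral term has coefficient $\tilde g_x^\ves(t)$ acting on $R_\ves^t$ plus a lower-order piece $(\tilde g_x^\ves(t)-\bar g_x(t))\ves\chi^t$.

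Next I would set $y_\ves := R_\ves/\ves$, which solves an NSFDE of exactly the type treated in Lemma \ref{convergence}: the neutral coefficient is $\mathcal{G}(t,\ves,\phi) = \tilde g_x^\ves(t)\phi$, which satisfies the contraction condition (i) with the same $\kappa$ from (A2) since $\|g_x\|\le\kappa$; the drift is $\mathcal{B}(t,\ves,\phi) = \tilde b_x^\ves(t)\phi + (\tilde b_x^\ves(t)-\bar b_x(t))\chi^t + (\tilde b_u^\ves(t)-\bar b_u(t))v(t)$, which is Lipschitz in $\phi$ uniformly in $\ves$ by boundedness of $b_x$ (condition (ii)), and likewise for $\mathcal{R}$. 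The crucial point is verifying condition (iii) at $\gamma_0 = 0$: as $\ves\to0^+$, $X_\ves\to\bar X$ in $\sS^2_\bF$ by the standard stability estimate for NSFDEs (an easy Gronwall argument of the kind already invoked), hence $\tilde g_x^\ves(t) \to \bar g_x(t)$, $\tilde b_x^\ves(t)\to\bar b_x(t)$, $\tilde b_u^\ves(t)\to\bar b_u(t)$, etc., in the appropriate $L^2$ senses by continuity of the derivatives (A1) and dominated convergence (the derivatives are bounded). This gives $\mathcal{B}(t,\ves,\chi^t)-\mathcal{B}(t,0,\chi^t) = (\tilde b_x^\ves(t)-\bar b_x(t))\chi^t + (\tilde b_u^\ves(t)-\bar b_u(t))v(t) \to 0$ in $L^1([0,T]\times\Omega)$, and similarly for $\mathcal G$ and $\mathcal R$; note $\mathcal{B}(t,0,\phi) = \bar b_x(t)\phi$, i.e. at $\gamma_0=0$ the equation for $y_0$ is precisely the defining equation \eqref{1-order-derivative} for $\chi$, so $y_0 = \chi$. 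Lemma \ref{convergence} then yields $\lim_{\ves\to0^+}E\sup_{t}|y_\ves(t)-\chi(t)|^2 = 0$.

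Finally I would translate this back: $y_\ves - \chi = R_\ves/\ves - \chi$ is not quite what we want, so instead I should apply Lemma \ref{convergence} to the process $\tilde y_\ves := R_\ves/\ves$ directly against the limit $\tilde y_0 \equiv 0$. That is, I choose the coefficients so that $\mathcal{B}(t,0,\phi) = \bar b_x(t)\phi + 0$ has trivial inhomogeneous part when $\ves=0$ is interpreted with all the "difference" terms vanishing, giving $\tilde y_0 \equiv 0$; then condition (iii) amounts to the same $L^1$-convergences as above (now the "inhomogeneous" parts $(\tilde b_x^\ves-\bar b_x)\chi^t + (\tilde b_u^\ves-\bar b_u)v$ etc. evaluated at the path $y_0^t\equiv0$ — which kills the $\tilde b_x^\ves \cdot$ part applied to $0$, leaving exactly the source terms that tend to $0$). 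Lemma \ref{convergence} gives $E\sup_t|R_\ves(t)/\ves|^2 \to 0$, i.e. $\frac{1}{\ves^2}E\sup_t|R_\ves(t)|^2\to 0$, which is the claim. The main obstacle I anticipate is purely bookkeeping: correctly organizing the mean-value expansions so that the neutral term keeps its contraction constant $\kappa<1$ uniformly in $\ves$ (essential for applying Lemma \ref{convergence}), and carefully checking that the several "coefficient-difference times $\chi^t$ or $v$" source terms each converge to zero in $L^1$ — this uses boundedness plus continuity of the derivatives and the already-known $\sS^2$-convergence $X_\ves\to\bar X$, so no genuinely new estimate is needed, but the details are somewhat lengthy. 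This is presumably why the authors relegate the actual verification to a brief remark, as they did for Lemma \ref{convergence} itself.
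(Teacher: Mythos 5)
Your proposal is correct and follows essentially the same route as the paper: the paper likewise writes the NSFDE satisfied by the difference quotient $z_\ves=(X_\ves-\bar X)/\ves$ via the integral mean-value form of the Fr\'echet derivatives and invokes Lemma \ref{convergence} to conclude $z_\ves\to\chi$ in $\sS^2_{\bF}$, which is exactly your final claim $E\sup_t|R_\ves(t)/\ves|^2\to0$ since $R_\ves/\ves=z_\ves-\chi$. The only blemish is the transient assertion ``$y_0=\chi$'' in your middle paragraph (inconsistent with your definition $y_\ves:=R_\ves/\ves$), which you correct yourself in the last paragraph; otherwise your verification of conditions (i)--(iii) is, if anything, more detailed than the paper's two-line proof.
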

\begin{proof}
Via Lemma \ref{convergence}, it is easy to prove
$$\lim_{\ves\rrow0^+}E\sup_{t\in[0,T]}|X_{\ves}(t)-\bar{X}(t)|^2=0.$$
Set $z_{\ves}:=\frac{X_{\ves}(t)-\bar{X}(t)}{\ves}$, then
\begin{equation*}
  \begin{split}
    &~~z_{\ves}(t)-\int_0^1g_x(t,\bar{X}\!+\!\theta\ves z^t_{\ves})\,z_{\ves}^t\,d\theta\\
    =&\int_0^t\Big[\int_0^1\!\!b_x(s,\bar{X}\!+\!\theta\ves z^t_{\ves},u_{\ves}(s))z_{\ves}^s\,d\theta+
    \int_0^1\!\!b_u(s,\bar{X}^s,\bar{u}(s)\!+\!\theta v(s))\,v(s)\,d\theta\Big]\,ds\\
    &+\int_0^t\Big[\int_0^1\!\!\sigma_x(s,\bar{X}\!+\!\theta\ves z^t_{\ves},u_{\ves}(s))\,z_{\ves}^s\,d\theta+
    \int_0^1\!\!\sigma_u(s,\bar{X}^s,\bar{u}(s)\!+\!\theta v(s))\,v(s)\,d\eta\Big]dW(s).
  \end{split}
\end{equation*}
In view of Lemma \ref{convergence}, we have
$$\lim_{\ves\rrow0^+}E\sup_{t\in[0,T]}|z_{\ves}(t)-\chi(t)|^2=0.$$
\end{proof}

Then we have the maximum principle.
\begin{thm}\label{maximum_principle}
Suppose that (A1)-(A3) hold. Let $(\bar{X}(\cdot),\bar{u}(\cdot))$ be an optimal pair, and $(Y,Z)\in\sM^2(0,T+\delta)$ be the adapted $M$-solution of linear VNBSFE \eqref{linear_VNBSFE}. Then we have for all $~u\in U$,
$$\left\la \bar{l}_u(t)+\bar{b}'_u(t)E_t\Bigl[\int_t^TY(s)\,ds\Bigr]
+\bar{\sigma}_u'(t)E_t\Bigl[\int_t^TZ(s,t)\,ds\Bigr],\,u-\bar{u}(t)\right\ra\geq 0, \quad t\in[0,T]\textrm{-}a.e.,$$
where $\bar{l}_u(t):=l(t,\bar{X}^t,\bar{u}(t))$.
\end{thm}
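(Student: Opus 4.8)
The plan is to derive the variational inequality from optimality of $(\bar X(\cdot),\bar u(\cdot))$ combined with the duality representation in Proposition \ref{dual_represt}. First I would use Lemma \ref{expansion}: writing $X_\eps(t)=\bar X(t)+\eps\chi(t)+R_\eps(t)$ with $\eps^{-2}E[\sup_t|R_\eps|^2]\to0$, I expand the cost functional $J(u_\eps(\cdot))$ to first order in $\eps$. Using (A1)–(A2) (boundedness and continuity of $l_x,l_u$), a dominated-convergence argument gives
\begin{equation*}
\lim_{\eps\to0^+}\frac{J(u_\eps(\cdot))-J(\bar u(\cdot))}{\eps}
 =E\Big[\int_0^T\bar l_x(t)\chi^t\,dt\Big]+E\Big[\int_0^T\bar l_u(t)v(t)\,dt\Big]
 =I(\chi(\cdot))+E\Big[\int_0^T\bar l_u(t)v(t)\,dt\Big],
\end{equation*}
and since $\bar u(\cdot)$ is optimal and $u_\eps(\cdot)\in\cU_{ad}$ for $\eps\in[0,1]$, this limit must be $\geq0$.

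Next I would invoke Proposition \ref{dual_represt} to rewrite $I(\chi(\cdot))$. Note that the linear NSFDE \eqref{linear_NSFDE} satisfied by $\chi$ here has the inhomogeneous terms $\bar b_u(s)v(s)$ and $\bar\sigma_u(s)v(s)$, so $\rho(t)=\int_0^t\bar b_u(s)v(s)\,ds+\int_0^t\bar\sigma_u(s)v(s)\,dW(s)$, and the proposition yields $I(\chi(\cdot))=E\big[\int_0^T\langle\rho(t),Y(t)\rangle\,dt\big]$, where $(Y,Z)$ is the adapted $M$-solution of \eqref{linear_VNBSFE} (its well-posedness being guaranteed by Theorem \ref{exist_unique_thm} under (A1)–(A3), after checking (H1)–(H3) for the linear generator). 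Then I apply the stochastic Fubini theorem and the Itô isometry to move the integrals off $Y$ and onto $v$: for the drift part,
\begin{equation*}
E\Big[\int_0^T\Big\langle\int_0^t\bar b_u(s)v(s)\,ds,\,Y(t)\Big\rangle dt\Big]
 =E\Big[\int_0^T\big\langle v(s),\,\bar b_u'(s)\!\int_s^T\!Y(t)\,dt\big\rangle ds\Big]
 =E\Big[\int_0^T\big\langle v(s),\,\bar b_u'(s)E_s\!\int_s^T\!Y(t)\,dt\big\rangle ds\Big],
\end{equation*}
using that $v(s)$ is $\sF_s$-measurable; and for the diffusion part, using $Y(t)=E[Y(t)]+\int_0^tZ(t,r)\,dW(r)$ together with the isometry,
\begin{equation*}
E\Big[\int_0^T\Big\langle\int_0^t\bar\sigma_u(s)v(s)\,dW(s),\,Y(t)\Big\rangle dt\Big]
 =E\Big[\int_0^T\big\langle v(s),\,\bar\sigma_u'(s)E_s\!\int_s^T\!Z(t,s)\,dt\big\rangle ds\Big].
\end{equation*}

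Combining, the first-order optimality condition becomes
\begin{equation*}
E\Big[\int_0^T\Big\langle \bar l_u(t)+\bar b_u'(t)E_t\!\int_t^T\!Y(s)\,ds+\bar\sigma_u'(t)E_t\!\int_t^T\!Z(s,t)\,ds,\;v(t)\Big\rangle dt\Big]\geq0
\end{equation*}
for every $v(\cdot)=u(\cdot)-\bar u(\cdot)$ with $u(\cdot)\in\cU_{ad}$. The final step is the standard localization/convexity argument: since $U$ is convex and $\cU_{ad}$ consists of all square-integrable $\bF$-progressively measurable $U$-valued processes, for any fixed $u\in U$ one may take $v(t)=(u-\bar u(t))\mathbf{1}_A(t,\omega)$ for an arbitrary progressively measurable set $A\subseteq[0,T]\times\Omega$; varying $A$ forces the integrand to be nonnegative $dt\otimes dP$-a.e., which is exactly the asserted pointwise inequality. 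I expect the main obstacle to be the rigorous justification of the stochastic Fubini interchange and the conditional-expectation manipulations in the diffusion term — in particular confirming that the $M$-solution structure of $(Y,Z)$ makes $E_s\int_s^T Z(t,s)\,dt$ well-defined and that all integrands lie in the right $L^2$ spaces so the isometry and Fubini apply; the passage to the limit in the cost expansion is routine given Lemma \ref{expansion}.
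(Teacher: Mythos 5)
Your proposal is correct and follows essentially the same route as the paper's own proof: first-order expansion of the cost via Lemma \ref{expansion}, duality via Proposition \ref{dual_represt}, then Fubini plus the $M$-solution representation $Y(t)=E[Y(t)]+\int_0^tZ(t,s)\,dW(s)$ and the It\^o isometry to shift the integrals onto $v$, and finally the convexity/localization argument to pass from the integrated inequality to the pointwise one. If anything, you are slightly more explicit than the paper on the final localization step and on why the conditional expectations $E_t$ appear (adaptedness of $v$), which the paper leaves implicit.
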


\begin{proof}
In view of assumptions (A1)-(A3) and Theorem \ref{exist_unique_thm}, VNBSFE \eqref{linear_VNBSFE} admits a unique pair of adapted $M$-solution $(Y,Z)\in\sM^2(0,T+\delta)$. Then,
\begin{equation}
  \begin{split}
  0\leq\,&\frac{J(u_{\ves}(\cdot))-J(\bar{u}(\cdot))}{\ves}
      =E\int_0^T\frac{l(t,X^t_{\ves},u_{\ves}(t))-l(t,\bar{X}^t,\bar{u}(t))}{\ves}\,dt\\
      =\,&E\int_0^T\int_0^1\!\!l_u(t,X^t_{\ves},\bar{u}(t)\!+\!\theta\ves v(t))v(t)d\theta\,dt
           +E\int_0^T\int_0^1\!\!l_u(t,\bar{X}^t\!+\!\theta(X^t_{\ves}\!-\!\bar{X}^t),\bar{u}(t))
           \frac{X^t_{\ves}\!-\!\bar{X}^t}{\ves}d\theta\,dt\\
      =\,&E\int_0^T\big[\bar{l}_u(t)v(t)+\bar{l}_x(t)\chi^t\big]\,dt
           +E\int_0^T\int_0^1\!\!\big[l_u(t,X^t_{\ves},\bar{u}(t)\!+\!\theta\ves v(t))-\bar{l}_u(t)\big]v(t)d\theta\,dt\\
        &+E\int_0^T\Big(\int_0^1\!\!l_u(t,\bar{X}^t\!+\!\theta(X^t_{\ves}\!-\!\bar{X}^t),\bar{u}(t))
            \frac{X^t_{\ves}-\bar{X}^t}{\ves}d\theta-\bar{l}_x(t)\chi^t\Big)\,dt.\\
  \end{split}
\end{equation}
In view of Lemma \ref{expansion}, we have
$$E\int_0^T\big[\bar{l}_u(t)v(t)+\bar{l}_x(t)\chi^t\big]\,dt\geq\,0.$$
Then applying assumption (A3) and Proposition \ref{dual_represt}, we have
\begin{equation*}
\begin{split}
0\leq\,&E\int_0^T\Big(\int_0^{\delta}\bar{L}(t,r)\chi(t-r)\lambda_3(dr)+\la\bar{l}_u(t),v(t)\ra\Big)\,dt\\
    =\,&E\int_0^T\Big(\bigl\la Y(t),\,\rho(t)\bigr\ra+\bigl\la\bar{l}_u(t),\,v(t)\bigr\ra\Big)\,dt\\
    =\,&E\Big[\int_0^T\bigl\la\bar{l}_u(t),\,v(t)\bigr\ra\,dt
                      +\int_0^T\!\!\left\la Y(t),\,\int_0^t\bar{b}_u(s)v(s)\,ds\right\ra\,dt\Big]\\
       &~~+E\int_0^T\left\la Y(t),\,\int_0^t\bar{\sigma}_u(s)v(s)\,dW(s)\right\ra\,dt\\
    =\,&E\int_0^T\left\la\bar{l}_u(t)+\bar{b}'_u(t)\int_t^T\!\!Y(s)ds
    +\bar{\sigma}'_u(t)\int_t^T\!\!Z(s,t)ds,\,u(t)-\bar{u}(t)\right\ra dt.
\end{split}
\end{equation*}
The last equality is due to
\begin{equation*}
\begin{split}
   &\int_0^T\!\!\left\la Y(t),\,\int_0^t\!\!\bar{\sigma}_u(s)\Delta u(s)dW(s)\right\ra\,dt\\
=\,&\int_0^T\!\!\left\la\int_0^t\!\!Z(t,s)dW(s),\,\int_0^t\bar{\sigma}_u(s)\Delta u(s)dW(s)\right\ra\,dt\\
=\,&\int_0^T\!\!\int_0^t\!\!\big\la Z(t,s),\,\bar{\sigma}_u(s)\Delta u(s)\big\ra\,dsdt
=\int_0^T\!\!\left\la\bar{\sigma}'_u(t)\int_t^T\!\!Z(s,t)ds,\,\Delta u(t)\right\ra\,dt.
\end{split}
\end{equation*}
So we have for all $u\in U$ and almost all $t\in[0,T]$,
$$\left\la\bar{l}_u(t)+\bar{b}'_u(t)E_t\int_t^T\!\!Y(s)ds
+\bar{\sigma}'_u(t)E_t\int_t^T\!\!Z(s,t)ds,\,u-\bar{u}(t)\right\ra\,\geq0.$$

\end{proof}

\begin{rmk}
If $g$ depends on $u$, assume that $g$ is continuously differentiable in $u$ with bounded derivative, and $\bar{g}_u(t):=g(t,\bar{X}^t,\bar{u}(t))$ is continuously in t. Define the admissible control set as follow,
$$\cU_{ad}:=\{u:[0,T]\times\Omega\rrow U,\hbox{ \rm path-continuous and bounded, $\bF$-progressively measurable}\}.$$
Define
$$\rho(t):=\bar{g}_u(t)v(t)+\int_0^t\bar{b}_u(s)v(s)\,ds+\int_0^t\bar{\sigma}_u(s)v(s)\,dW(s).$$
The maximum principle can be derived similarly. That is, for all $u\in U$ and almost all $t\in[0,T]$,
$$\left\la\bar{g}'_u(t)Y(t)+\bar{l}_u(t)+\bar{b}'_u(t)E_t\left[\int_t^T\!\!Y(s)ds\right]
    +\bar{\sigma}'_u(t)E_t\left[\int_t^T\!\!Z(s,t)ds\right],\,u-\bar{u}(t)\right\ra\,\geq0,$$
and
$$\left\la\bar{g}'_u(0)E\int_0^TY(t)dt,\,u(0)-\bar{u}(0)\right\ra\,\geq0.$$
\end{rmk}

\section{An Example}
In this section, we establish the maximum principle of a controlled stochastic differential equation (SDE) via the method in the preceding sections. Maximum principle for controlled SDEs was first discussed by Bismut \cite{Bismut73, Bismut76, Bismut78}, in which the maximum principle was established via the solution of linear Backward stochastic differential equations (BSDEs). Here we compare the maximum principle here with that in \cite{Bismut78}, and show the explicit relation between them.

If $g\equiv0$ and $\delta=0$, the controlled NSFDE \eqref{NSFDEs} reduces to the following controlled SDE,
\begin{equation*}
\left\{
\begin{split}
&dX(t)=b(t,X(t),u(t))\,dt+\sigma(t,X(t),u(t))\,dW(t),\quad t\in[0,T],\\
&X(0)=x,
\end{split}
\right.
\end{equation*}
and the cost functional reduces to
$$J(u(\cdot))=E\Big[\int_0^T l(t,X(t),u(t))\,dt\Big].$$
Suppose that $(A1)$ and $(A2)$ still hold. The admissible control set and the optimal control problem are the same as Section 2.

As a corollary of Theorem \ref{maximum_principle}, we have the maximum principle,
\begin{cor}\label{cor1}
Suppose that $(Y,Z)\in\sM^2(0,T)$ is the adapted $M$-solution of the following equation:
\begin{equation}\label{VBS}
Y(t)=\bar{l}_x(t)+\int_t^T\Bigl(\bar{b}'_x(t)Y(s)+\bar{\sigma}'_x(t)Z(s,t)\Bigr)\,ds+\int_t^TZ(t,s)\,dW(s).
\end{equation}
Let $(\bar{X}(\cdot),\bar{u}(\cdot))$ be the optimal pair. Then for all $u\in U$,
\begin{equation*}
\left\la\bar{l}_u(t)+\bar{b}'_u(t)E_t\Big[\int_t^T\!\!Y(s)ds\Big]+\bar{\sigma}'_u(t)E_t\Big[\int_t^T\!\!Z(s,t)ds\Big],
\,u-\bar{u}(t)\right\ra \geq0, \quad  t\in[0,T]\textrm{-}a.e..
\end{equation*}
\end{cor}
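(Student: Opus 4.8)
The plan is to obtain Corollary~\ref{cor1} as a direct specialization of Theorem~\ref{maximum_principle} to the degenerate case $g\equiv 0$, $\delta=0$. First I would record what Assumption (A3) and the linear VNBSFE \eqref{linear_VNBSFE} become when $\delta=0$. With $\delta=0$, the path space $C([0,\delta];\bR^n)$ collapses to $\bR^n$, so every measure $\lambda_i$ is the Dirac mass at $0$, and the Riesz representatives reduce to ordinary matrices: $\bar G(t,0)=0$ (since $g\equiv0$), $\bar B(t,0)=\bar b_x(t)$, $\bar\Sigma(t,0)=\bar\sigma_x(t)$, $\bar L(t,0)=\bar l_x(t)$. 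In particular (A3) is automatically satisfied, so Theorem~\ref{maximum_principle} applies verbatim.

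Next I would substitute these reductions into \eqref{linear_VNBSFE}. The term $E_t[\int_0^\delta \bar G'(t+r,r)Y(t+r)\lambda_0(dr)]$ vanishes; the driving term $\int_0^\delta \bar L(t+r,r)\lambda_3(dr)$ becomes $\bar l_x(t)$; and the two integrals against $\lambda_1,\lambda_2$ collapse to $E_s[\bar b_x'(t)Y(s)+\bar\sigma_x'(t)Z(s,t)]$. Since on $\triangle$ the process $Z(s,t)$ is already $\sF_s$-measurable (as $(Y,Z)\in\sM^2$), the conditional expectation $E_s$ can be absorbed, and one is left exactly with equation \eqref{VBS}. Likewise the terminal condition $Y(t)=0$ on $(T,T+\delta]$ is vacuous when $\delta=0$, so the solution lives in $\sM^2(0,T)$ rather than $\sM^2(0,T+\delta)$. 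Thus the adapted $M$-solution $(Y,Z)\in\sM^2(0,T)$ of \eqref{VBS} is precisely the one furnished by Theorem~\ref{exist_unique_thm} for the reduced problem.

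Finally I would invoke the conclusion of Theorem~\ref{maximum_principle} itself: with $\bar g_u\equiv 0$ and the above identifications, its variational inequality
$$\left\la \bar l_u(t)+\bar b_u'(t)E_t\Bigl[\int_t^T Y(s)\,ds\Bigr]+\bar\sigma_u'(t)E_t\Bigl[\int_t^T Z(s,t)\,ds\Bigr],\,u-\bar u(t)\right\ra\geq 0$$
is exactly the assertion of Corollary~\ref{cor1}. Since nothing in the spike-variation / first-order-expansion machinery of Section~5 used $\delta>0$ in an essential way, no new estimates are required; the only things to check are the bookkeeping identifications above and that Lemmas~\ref{convergence}, \ref{expansion} and Theorem~\ref{exist_unique_thm} remain valid when $\delta=0$, which they do since $\delta\geq0$ was allowed throughout.

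The main obstacle, such as it is, is purely notational: carefully verifying that the conditional expectation operators $E_s$ appearing inside the integrals of \eqref{linear_VNBSFE} really do drop out when $\delta=0$ (because the arguments $Y(s),Z(s,t)$ are then $\sF_s$-measurable on the relevant region), so that \eqref{linear_VNBSFE} genuinely coincides with \eqref{VBS} and not merely a conditioned version of it. Everything else is immediate specialization.
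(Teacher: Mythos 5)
Your proposal is correct and is exactly the route the paper takes: Corollary \ref{cor1} is stated there as an immediate specialization of Theorem \ref{maximum_principle} to $g\equiv0$, $\delta=0$, with no further argument given. Your additional bookkeeping (the $\lambda_i$ becoming Dirac masses, $\bar G\equiv0$, the $E_s$ dropping because $Y(s)$ and $Z(s,t)$ are $\sF_s$-measurable on $\triangle$) simply makes explicit what the paper leaves implicit.
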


Recall the maximum principle in Bismut \cite{Bismut78}.
\begin{prop}\label{prop1}
Suppose that $(P,Q)\in\sS^2_{\bF}([0,T];\bR^2)\times\sL^2_{\bF}(0,T;\bR^{n\times d})$ is the solution of the following BSDE:
\begin{equation}\label{BSDEP}
P(t)=\int_t^T\Bigl(\bar{b}_x(s)P(s)+\bar{\sigma}_x(s)Q(s)+\bar{h}_x(s)\Bigr)\,ds-\int_t^TQ(s)\,dW(s).
\end{equation}
Let $(\bar{X}(\cdot),\bar{u}(\cdot))$ be the optimal pair. Then for all $u\in U$,
\begin{equation*}
\bigl\la\bar{l}_u(t)+\bar{b}'_u(t)P(t)+\bar{\sigma}'_u(t)Q(t),\,u-\bar{u}(t)\bigr\ra\geq0,\quad t\in[0,T]\textrm{-}a.e..
\end{equation*}
\end{prop}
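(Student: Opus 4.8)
The plan is to establish the equivalence between Corollary~\ref{cor1} and Proposition~\ref{prop1} by identifying the appropriate transformation between the solution $(Y,Z)$ of the backward stochastic Volterra integral equation \eqref{VBS} and the solution $(P,Q)$ of the classical BSDE \eqref{BSDEP}. First I would observe that in \eqref{VBS} the coefficients $\bar{b}_x(t)$, $\bar{\sigma}_x(t)$, $\bar{l}_x(t)$ depend only on the ``first'' time variable $t$, which is precisely the degenerate structure that should allow one to reduce the Volterra equation to an ordinary BSDE. The natural candidate is to set $P(t):=E_t\big[\int_t^T Y(s)\,ds\big]$, since this is exactly the object appearing in the maximum principle of Corollary~\ref{cor1}, and similarly to relate $Q$ to $E_t\big[\int_t^T Z(s,t)\,ds\big]$.

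The key computational step is then to derive the dynamics of $P(t)=E_t\big[\int_t^T Y(s)\,ds\big]$. Writing $M(t):=E_t\big[\int_0^T Y(s)\,ds\big]$, this is a square-integrable martingale, so by the martingale representation theorem $M(t)=M(0)+\int_0^t Q(s)\,dW(s)$ for some $Q\in\sL^2_{\bF}(0,T;\bR^{n\times d})$; then $P(t)=M(t)-\int_0^t Y(s)\,ds$ gives $dP(t)=-Y(t)\,dt+Q(t)\,dW(t)$ with $P(T)=0$. Next I would substitute the expression for $Y(t)$ from \eqref{VBS}. Taking $E_t$ of both sides of \eqref{VBS} and using that $\bar{b}_x(t),\bar{\sigma}_x(t),\bar{l}_x(t)$ are $\sF_t$-measurable, one gets $Y(t)=\bar{l}_x(t)+\bar{b}'_x(t)E_t\big[\int_t^T Y(s)\,ds\big]+\bar{\sigma}'_x(t)E_t\big[\int_t^T Z(s,t)\,ds\big]$ (using the $M$-solution property to handle the stochastic integral term). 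Hence $Y(t)=\bar{l}_x(t)+\bar{b}'_x(t)P(t)+\bar{\sigma}'_x(t)\widetilde{Q}(t)$ where $\widetilde{Q}(t):=E_t\big[\int_t^T Z(s,t)\,ds\big]$. Substituting this into $dP(t)=-Y(t)\,dt+Q(t)\,dW(t)$ yields exactly \eqref{BSDEP} in the form $P(t)=\int_t^T\big(\bar{b}_x(s)P(s)+\bar{\sigma}_x(s)\widetilde{Q}(s)+\bar{l}_x(s)\big)\,ds-\int_t^T Q(s)\,dW(s)$ — noting a likely typo in the paper where $\bar{h}_x$ should read $\bar{l}_x$, and that the roles of $\bar{b}_x$ versus $\bar{b}'_x$ are matched by the transpose convention.

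The remaining step is to identify $\widetilde{Q}(t)=E_t\big[\int_t^T Z(s,t)\,ds\big]$ with the $Q(t)$ from the martingale representation. Here I would use the $M$-solution structure: the relation $Y(s)=E[Y(s)]+\int_0^s Z(s,r)\,dW(r)$ for $s\in[0,T]$, integrated in $s$ over $[t,T]$ and conditioned on $\sF_t$, should produce $M(T)-M(t)=\int_t^T\big(\int_t^s Z(s,r)\,dW(r)\big)\,ds$ after a Fubini argument, and comparing the $dW(t)$-coefficient against $Q(t)\,dW(t)$ in $dM(t)$ gives $Q(t)=E_t\big[\int_t^T Z(s,t)\,ds\big]=\widetilde{Q}(t)$. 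Once $P(t)=E_t\big[\int_t^T Y(s)\,ds\big]$ and $Q(t)=E_t\big[\int_t^T Z(s,t)\,ds\big]$ are established, the two variational inequalities coincide term by term, completing the comparison.

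I expect the main obstacle to be the careful justification of the Fubini interchange and the conditional-expectation manipulations in the last step, particularly making rigorous the passage from the family of $M$-solution relations indexed by $s$ to a single martingale representation for $M(t)$, and verifying that the $dW$-integrand one reads off really is $t\mapsto E_t\big[\int_t^T Z(s,t)\,ds\big]$ rather than something involving the off-diagonal values of $Z$. A secondary technical point is confirming that the solution $(P,Q)$ so constructed has the regularity $\sS^2_{\bF}([0,T];\bR^n)\times\sL^2_{\bF}(0,T;\bR^{n\times d})$ demanded in Proposition~\ref{prop1}, which follows from the $\sM^2(0,T)$-estimate in Theorem~\ref{exist_unique_thm} together with standard BSDE bounds, but should be stated explicitly.
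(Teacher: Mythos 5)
Your proposal is correct, but it reaches Proposition~\ref{prop1} by a genuinely different route from the paper. The paper does not prove Proposition~\ref{prop1} at all: it is recalled from Bismut's work, where it is obtained directly by duality between the linearized state equation and the BSDE \eqref{BSDEP}; the identification $P(t)=E_t[\int_t^T Y(s)\,ds]$, $Q(t)=E_t[\int_t^T Z(s,t)\,ds]$ is then established separately in Theorem~\ref{relation} by writing down the two duality identities (one against $(P,Q)$, one against $(Y,Z)$) and ``comparing'' them. That comparison only yields the equality of the pairings $\la \bar b_u'P+\bar\sigma_u'Q,\,v\ra$ and $\la \bar b_u'E_t\int_t^T Y\,ds+\bar\sigma_u'E_t\int_t^T Z(s,\cdot)\,ds,\,v\ra$ integrated against arbitrary $v$, which suffices for the equivalence of the two variational inequalities but gives the pointwise identification of $(P,Q)$ only after contraction with $\bar b_u,\bar\sigma_u$. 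Your argument is stronger and more self-contained: you show directly, via martingale representation, the $M$-solution property and stochastic Fubini, that $\big(E_t[\int_t^T Y(s)\,ds],\,\int_t^T Z(s,t)\,ds\big)$ solves \eqref{BSDEP}, and then import the inequality from Corollary~\ref{cor1}. Two small points to make explicit: (i) you should invoke uniqueness of solutions to the Lipschitz BSDE \eqref{BSDEP} to conclude that the $(P,Q)$ hypothesized in the statement coincides with the pair you construct (otherwise the inequality is only proved for your particular pair); (ii) note that since $Z(s,\cdot)$ is $\bF$-adapted, $\int_t^T Z(s,t)\,ds$ is already $\sF_t$-measurable, so the conditional expectation in $Q(t)$ is redundant --- this is exactly why the $dW$-integrand you read off from the Fubini interchange is the claimed one. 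Your reading of $\bar h_x$ as a typo for $\bar l_x$ is right (the paper never defines $h$), as is the remark about the transpose convention; the space $\sS^2_{\bF}([0,T];\bR^2)$ in the statement should likewise read $\bR^n$.
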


In fact, the two maximum principles possess the following relationship.
\begin{thm}\label{relation}
Let $(Y,Z)$ and $(P,Q)$ be processes as above, then
$$P(t)=E_t\Big[\int_t^TY(s)\,ds\Big],~~~~~~Q(t)=E_t\Big[\int_t^TZ(s,t)\,ds\Big].$$
Moreover, the maximum principle in Corollary \ref{cor1} and Proposition \ref{prop1} are equivalent.
\end{thm}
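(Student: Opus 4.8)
The plan is to establish the identity $P(t)=E_t\big[\int_t^T Y(s)\,ds\big]$ and $Q(t)=E_t\big[\int_t^T Z(s,t)\,ds\big]$ first, and then note that once this identity holds, the equivalence of the two maximum principles is immediate from comparing the variational inequalities in Corollary \ref{cor1} and Proposition \ref{prop1} term by term. So the real content is the representation formula. I would define the candidate processes $\widehat{P}(t):=E_t\big[\int_t^T Y(s)\,ds\big]$ and $\widehat{Q}(t):=E_t\big[\int_t^T Z(s,t)\,ds\big]$, and show that $(\widehat P,\widehat Q)$ solves BSDE \eqref{BSDEP}; then uniqueness of the solution of \eqref{BSDEP} (Lemma \ref{BSDE_conclusion}) forces $(\widehat P,\widehat Q)=(P,Q)$.

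To carry this out, first I would integrate the defining equation \eqref{VBS} of the $M$-solution $(Y,Z)$ over $s\in[t,T]$. Recall that for $\delta=0$, $g\equiv0$ the equation reads $Y(s)=\bar l_x(s)+\int_s^T\big(\bar b_x'(s)Y(r)+\bar\sigma_x'(s)Z(r,s)\big)\,dr+\int_s^T Z(s,r)\,dW(r)$. Applying $E_t[\cdot]$ for $s\ge t$, the stochastic integral $\int_s^T Z(s,r)\,dW(r)$ has zero conditional expectation given $\sF_s\supseteq\sF_t$, hence given $\sF_t$; so $E_t[Y(s)]=E_t\big[\bar l_x(s)+\int_s^T(\bar b_x'(s)Y(r)+\bar\sigma_x'(s)Z(r,s))\,dr\big]$. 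Integrating in $s$ from $t$ to $T$ and using Fubini on the double integral, $\widehat P(t)=E_t\big[\int_t^T\bar l_x(s)\,ds+\int_t^T\big(\bar b_x'(s)\int_s^T Y(r)\,dr+\bar\sigma_x'(s)\int_s^T Z(r,s)\,dr\big)ds\big]$. Since $\bar h_x=\bar l_x$ in Bismut's notation (here the running cost is $l$), and recognizing $E_s\big[\int_s^T Y(r)\,dr\big]=\widehat P(s)$ and $E_s\big[\int_s^T Z(r,s)\,dr\big]=\widehat Q(s)$, the tower property collapses the inner terms to $\widehat P(t)=E_t\big[\int_t^T(\bar b_x(s)\widehat P(s)+\bar\sigma_x(s)\widehat Q(s)+\bar l_x(s))\,ds\big]$, which is exactly the integral (conditional-expectation) form of \eqref{BSDEP}.

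It then remains to identify $\widehat Q$ as the genuine martingale-integrand of the BSDE, i.e.\ to check that the martingale representation of $\widehat P(t)+\int_0^t(\ldots)\,ds$ has integrand $\widehat Q(t)$. This is where the $M$-solution structure enters: by definition $Y(s)=E[Y(s)]+\int_0^s Z(s,r)\,dW(r)$, and this is precisely what pins down $Z(s,t)$ on $\triangle$ and makes $E_t\big[\int_t^T Z(s,t)\,ds\big]$ the right object. Concretely, writing $M(t):=E_t\big[\int_0^T Y(s)\,ds-\int_0^T(\bar b_x(s)\widehat P(s)+\bar\sigma_x(s)\widehat Q(s)+\bar l_x(s))\,ds\big]$ and differentiating, one computes $dM(t)$ and reads off its diffusion coefficient using $Y(s)=E[Y(s)]+\int_0^s Z(s,r)\,dW(r)$; the coefficient comes out as $\int_t^T Z(s,t)\,ds$ in the appropriate conditional sense, i.e.\ $\widehat Q(t)$. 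I would make this rigorous by invoking Lemma \ref{conclusionVBSIE} / Lemma \ref{BSDE_conclusion}: both $(\widehat P,\widehat Q)$ and $(P,Q)$ solve \eqref{BSDEP}, so they coincide.

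The main obstacle I anticipate is the bookkeeping in Step 2–3, specifically justifying the interchange of conditional expectation with the $ds$-integral and correctly extracting the martingale integrand $\widehat Q$ from the $M$-solution identity $Y(s)=E[Y(s)]+\int_0^s Z(s,r)\,dW(r)$; one must be careful that this identity is over the first argument of $Z$, whereas the BSDE integrand involves $Z(s,t)$ with $t$ the \emph{second} argument, and it is exactly the $M$-solution constraint that reconciles the two. Once the representation $P(t)=E_t\big[\int_t^TY(s)\,ds\big]$, $Q(t)=E_t\big[\int_t^TZ(s,t)\,ds\big]$ is in hand, substituting into Bismut's inequality $\la\bar l_u(t)+\bar b_u'(t)P(t)+\bar\sigma_u'(t)Q(t),\,u-\bar u(t)\ra\ge0$ reproduces verbatim the inequality of Corollary \ref{cor1}, and conversely, giving the claimed equivalence.
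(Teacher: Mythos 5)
Your argument is correct, but it takes a genuinely different route from the paper. The paper does not verify the BSDE directly: it computes the Gateaux derivative pairing $E\int_0^T\la\chi(t),\bar{l}_x(t)\ra\,dt$ twice, once by the duality between the linear SDE and BSDE \eqref{BSDEP} (giving \eqref{5}, with $P,Q$ paired against $\bar{b}_u v,\bar{\sigma}_u v$) and once by the duality of Proposition \ref{dual_represt} specialized to $\delta=0$ (giving \eqref{55}, with $\int_t^TY(s)\,ds$ and $\int_t^TZ(s,t)\,ds$ paired against the same quantities), and then concludes by comparing the two identities as $v$ varies. Your approach instead defines $\widehat{P}(t)=E_t[\int_t^TY(s)\,ds]$, $\widehat{Q}(t)=E_t[\int_t^TZ(s,t)\,ds]$, integrates \eqref{VBS} in its first variable, uses the tower property to close the system, and then extracts the martingale integrand from the $M$-solution identity $Y(s)=E[Y(s)]+\int_0^sZ(s,r)\,dW(r)$ via stochastic Fubini (note $Z(s,r)$ is $\sF_r$-measurable in its second argument, so $\int_r^TZ(s,r)\,ds$ is indeed an admissible integrand and equals $\widehat{Q}(r)$), before invoking uniqueness of the BSDE solution. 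Each approach has a virtue: the paper's is shorter because it recycles the two dualities already proved, but as written it only identifies $P-\widehat{P}$ and $Q-\widehat{Q}$ against test functions of the form $\bar{b}_u(t)v(t)$ and $\bar{\sigma}_u(t)v(t)$, so strictly it needs the perturbations to span enough directions (or the duality computation to be rerun with arbitrary $\rho$); your verification-plus-uniqueness argument gives the identity unconditionally, at the cost of the Fubini and martingale-representation bookkeeping, which you correctly flag as the delicate point. The final step --- substituting the representation into Bismut's variational inequality to get the equivalence of Corollary \ref{cor1} and Proposition \ref{prop1} --- is the same in both treatments.
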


\begin{proof}
Similar to the proof in Theorem \ref{maximum_principle}, we have $\frac{X_{\eps}(\cdot)-\bar{X}(\cdot)}{\eps}$ converges to
$\chi(\cdot)$ in $\sS^2_{\bF}([0,T];\bR^n)$, where $\chi(\cdot)$ satisfies
$$\chi(t)=\int_0^t\Big[\bar{b}_x(s)\chi(s)+\bar{b}_u(s)v(s)\Big]\,ds
+\int_0^t\Big[\bar{\sigma}_x(s)\chi(s)+\bar{\sigma}_u(s)v(s)\Big]\,dW(s).$$
Denote
$$\rho(t):=\int_0^t\bar{b}_u(s)v(s)\,ds+\int_0^t\bar{\sigma}_u(s)v(s)\,dW(s).$$
The duality between linear SDE and BSDE \eqref{BSDEP} shows
\begin{equation}\label{5}
E\int_0^T\la\chi(t),\,\bar{l}_x(t)\ra\,dt=E\int_0^T\Big(\la P(t),\,\bar{b}_u(t)v(t)\ra +\la
Q(t),\,\bar{\sigma}_u(t)v(t)\ra\Big)\,dt,
\end{equation}
and the duality between linear SDE and VNBSFE \eqref{VBS} shows:
\begin{equation}\label{55}
\begin{split}
   &E\int_0^T\la\chi(t),\,\bar{l}_x(t)\ra\,dt =E\int_0^T\la\rho(t),\,Y(t)\ra\,dt\\
=\,&E\int_0^T\!\left\la\int_0^t\bar{b}_u(s)v(s)\,ds,\,Y(t)\right\ra dt
    +E\int_0^T\!\left\la\int_0^t\bar{\sigma}_u(s)v(s)\,dW(s),\,Y(t)\!\right\ra dt\\
=\,&E\int_0^T\!\left\la\bar{b}_u(t)\Delta u(t),\,\!\int_t^T\!\!Y(s)ds\right\ra\,dt
  +E\int_0^T\!\left\la\bar{\sigma}_u(t)v(t),\,\int_t^T\!Z(s,t)ds\right\ra dt.
\end{split}
\end{equation}
Compare \eqref{5} and \eqref{55}, we get the conclusion.
\end{proof}

\begin{rmk}
From the foregoing discussion, the method in this paper dealing with the optimal control problem of NSFDEs is consistent with the traditional one dealing with SDEs. However, when the state equation of the optimal problem behaves more generally than semi-martingale, the traditional one is no longer applicable.
\end{rmk}

\begin{rmk}
For more complex case, such as the general cost function and non-convex control set, the optimal control problem in this paper should be discusses further.
\end{rmk}

\bibliographystyle{siam}
%\bibliography{ref_weilq1}

\end{document}